\documentclass[10pt,draft]{article}
\usepackage{amsmath,amscd}
\usepackage{amssymb,latexsym,amsthm}
\usepackage{color}
\usepackage[spanish,english]{babel}
\usepackage{amssymb}
\usepackage{color}
\usepackage{amsmath,amsthm,amscd}
\usepackage[latin1]{inputenc}
\usepackage{lscape}
\usepackage{fancyhdr}
\usepackage{amsfonts}
\usepackage{pb-diagram}

\numberwithin{equation}{section}
\newtheorem{theorem}{Theorem}[section]

\newtheorem{definition}[theorem]{Definition}
\newtheorem{proposition}[theorem]{Proposition}
\newtheorem{lemma}[theorem]{Lemma}

\newtheorem{corollary}[theorem]{Corollary}

\theoremstyle{definition}

\newtheorem{example}[theorem]{Example}

\newtheorem{remark}[theorem]{Remark}

\newcommand{\cA}{\mbox{${\cal A}$}}

\newcommand{\cO}{\mbox{${\cal O}$}}

\newcommand{\cU}{\mbox{${\cal U}$}}

\newcommand{\cW}{\mbox{${\cal W}$}}

\hoffset-0.3in
\voffset-1.3cm \setlength{\oddsidemargin}{9mm}
\setlength{\textheight}{21.3cm}\setlength{\textwidth}{16cm}

\title{\textbf{Some homological properties of skew $PBW$ extensions}}
\author{Oswaldo Lezama\\
\texttt{jolezamas@unal.edu.co}
\\Armando Reyes\\ Seminario de Álgebra Constructiva - SAC$^2$\\ Departamento de Matemáticas\\ Universidad Nacional de
Colombia, Sede Bogot\'a}
\date{}
\begin{document}
\maketitle
\begin{abstract}
\noindent We prove that if $R$ is a left Noetherian and left regular ring then the same is true for
any bijective skew $PBW$ extension $A$ of $R$. From this we get Serre's Theorem for such
extensions. We show that skew $PBW$ extensions and its localizations include a wide variety of
rings and algebras of interest for modern mathematical physics such as $PBW$ extensions, well known
classes of Ore algebras, operator algebras, diffusion algebras, quantum algebras, quadratic
algebras in 3-variables, skew quantum polynomials, among many others. We estimate the global, Krull
and Goldie dimensions, and also Quillen's $K$-groups.

\bigskip

\noindent \textit{Key words and phrases.} Noetherian regular non-commutative rings, graded rings,
$PBW$ extensions, quantum algebras, Serre's Theorem, global, Krull and Goldie dimensions, algebraic
$K$-theory.

\bigskip

\noindent 2010 \textit{Mathematics Subject Classification.} Primary: 16S80, 16W25, 16S36, 16U20,
18F25. Secondary: 16W50, 16E65.
\end{abstract}
\section{Definitions and elementary properties}\label{definitionexamplesspbw}
In this section we recall the definition of skew $PBW$ (Poincaré-Birkhoff-Witt) extensions defined
firstly in \cite{LezamaGallego}, and also some elementary properties about the polynomial
interpretation of this kind of non-commutative rings. Skew $PBW$ extensions are a generalization of
$PBW$ extensions introduced in \cite{Bell}, and include a wide variety of rings and algebras of
interest for modern mathematical physics as we will show in Section \ref{Examplestrad}.
\begin{definition}\label{gpbwextension}
Let $R$ and $A$ be rings. We say that $A$ is an \textit{skew $PBW$ extension of $R$} $($also called
a $\sigma-PBW$ extension of $R$$)$ if the following conditions hold:
\begin{enumerate}
\item[\rm (i)]$R\subseteq A$.
\item[\rm (ii)]There exist finite elements $x_1,\dots ,x_n\in A$ such $A$ is a left $R$-free module with basis
\begin{center}
${\rm Mon}(A):= \{x^{\alpha}=x_1^{\alpha_1}\cdots
x_n^{\alpha_n}\mid \alpha=(\alpha_1,\dots ,\alpha_n)\in
\mathbb{N}^n\}$.
\end{center}
In this case we say also that \textit{$A$ is a left polynomial
ring over $R$} with respect to $\{x_1,\dots,x_n\}$ and ${\rm Mon}(A)$ is
the set of standard monomials of $A$. In addition, $x_1^0\cdots
x_n^0:=1\in {\rm Mon}(A)$.
\item[\rm (iii)]For every $1\leq i\leq n$ and $r\in R-\{0\}$ there exists $c_{i,r}\in R-\{0\}$ such that
\begin{equation}\label{sigmadefinicion1}
x_ir-c_{i,r}x_i\in R.
\end{equation}
\item[\rm (iv)]For every $1\leq i,j\leq n$ there exists $c_{i,j}\in R-\{0\}$ such that
\begin{equation}\label{sigmadefinicion2}
x_jx_i-c_{i,j}x_ix_j\in R+Rx_1+\cdots +Rx_n.
\end{equation}
Under these conditions we will write $A:=\sigma(R)\langle
x_1,\dots ,x_n\rangle$.
\end{enumerate}
\end{definition}
The following proposition justifies the notation and the
alternative name given for the skew $PBW$ extensions.
\begin{proposition}\label{sigmadefinition}
Let $A$ be an skew $PBW$ extension of $R$. Then, for every $1\leq i\leq n$, there exists an
injective ring endomorphism $\sigma_i:R\rightarrow R$ and a $\sigma_i$-derivation
$\delta_i:R\rightarrow R$ such that
\begin{center}
$x_ir=\sigma_i(r)x_i+\delta_i(r)$,
\end{center}
for each $r\in R$.
\end{proposition}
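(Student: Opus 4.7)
The plan is to read off $\sigma_i$ and $\delta_i$ directly from condition (iii) of Definition \ref{gpbwextension}, and then verify the ring-theoretic properties by comparing two expressions for $x_i(r+s)$ and $x_i(rs)$ under the uniqueness supplied by condition (ii).

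First I would fix $1\leq i\leq n$ and, for each $r\in R\setminus\{0\}$, use (iii) to write $x_ir-c_{i,r}x_i\in R$. Since $A$ is a free left $R$-module with basis $\mathrm{Mon}(A)$, the element $x_ir$ has a \emph{unique} expansion in that basis, so both $c_{i,r}\in R$ (the coefficient of $x_i$) and the constant term $d_{i,r}:=x_ir-c_{i,r}x_i\in R$ are uniquely determined by $r$. I then define $\sigma_i(r):=c_{i,r}$ and $\delta_i(r):=d_{i,r}$ for $r\neq 0$, and $\sigma_i(0):=0$, $\delta_i(0):=0$. By construction $x_ir=\sigma_i(r)x_i+\delta_i(r)$ for every $r\in R$, which is the desired commutation formula.

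The next step is to check that $\sigma_i$ is a ring endomorphism and $\delta_i$ a $\sigma_i$-derivation, and here the only tool I need is uniqueness of coefficients in $\mathrm{Mon}(A)$. For additivity, computing $x_i(r+s)$ in two ways yields $\sigma_i(r+s)x_i+\delta_i(r+s)=(\sigma_i(r)+\sigma_i(s))x_i+(\delta_i(r)+\delta_i(s))$, and matching coefficients gives additivity of both $\sigma_i$ and $\delta_i$. For multiplicativity, expanding $x_i(rs)=(x_ir)s=(\sigma_i(r)x_i+\delta_i(r))s=\sigma_i(r)\sigma_i(s)x_i+\sigma_i(r)\delta_i(s)+\delta_i(r)s$ and comparing with $x_i(rs)=\sigma_i(rs)x_i+\delta_i(rs)$ yields $\sigma_i(rs)=\sigma_i(r)\sigma_i(s)$ together with the $\sigma_i$-Leibniz rule $\delta_i(rs)=\sigma_i(r)\delta_i(s)+\delta_i(r)s$. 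Plugging $r=1$ into $x_i\cdot 1=x_i$ gives $\sigma_i(1)=1$ and $\delta_i(1)=0$, so $\sigma_i$ is a ring endomorphism.

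Finally I would establish injectivity of $\sigma_i$. The key observation is that condition (iii) asserts $c_{i,r}\in R\setminus\{0\}$ whenever $r\in R\setminus\{0\}$, so $\sigma_i(r)\neq 0$ for all nonzero $r$, i.e.\ $\ker\sigma_i=0$.

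I do not expect any real obstacle: the whole argument is a bookkeeping exercise that rests entirely on the uniqueness of the $\mathrm{Mon}(A)$-expansion from (ii) together with the quantitative content of (iii). The only point that requires a little care is the separate definition of $\sigma_i$ and $\delta_i$ at $r=0$ so that the formula $x_ir=\sigma_i(r)x_i+\delta_i(r)$ is literally valid on all of $R$, and the verification that $c_{i,r}\neq 0$ is exactly what gives injectivity rather than merely a homomorphism.
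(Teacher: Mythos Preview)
Your proof is correct and follows essentially the same approach as the paper: define $\sigma_i$ and $\delta_i$ from the unique $\mathrm{Mon}(A)$-expansion of $x_ir$, verify the endomorphism and $\sigma_i$-derivation identities by comparing coefficients, and deduce injectivity from the nonvanishing of $c_{i,r}$ in condition (iii). In fact you spell out the verifications that the paper leaves as ``easy to check''.
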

\begin{proof}
We present the proof given in \cite{LezamaGallego} Proposition 3:
for every $1\leq i\leq n$ and each $r\in R$ we have elements
$c_{i,r},r_i\in R$ such that $x_ir=c_{i,r}x_i+r_i$; since ${\rm Mon}(A)$
is a $R$-basis of $A$ then $c_{i,r}$ and $r_i$ are unique for $r$,
so we define $\sigma_i,\delta_i:R\rightarrow R$ by
$\sigma_i(r):=c_{i,r}$, $\delta_i(r):=r_i$. It is easy to check
that $\sigma_i$ is a ring endomorphism and $\delta_i$ is a
$\sigma_i$-derivation of $R$, i.e.,
$\delta_i(r+r')=\delta_i(r)+\delta_i(r')$ and
$\delta_i(rr')=\sigma_i(r)\delta_i(r')+\delta_i(r)r'$, for any
$r,r'\in R$. Moreover, by Definition \ref{gpbwextension} (iii),
$c_{i,r}\neq 0$ for $r\neq 0$. This means that $\sigma_i$ is
injective.
\end{proof}
A particular case of skew $PBW$ extension is when all derivations $\delta_i$ are zero. Another
interesting case is when all $\sigma_i$ are bijective and the constants $c_{ij}$ are invertible. We
recall the following definition (cf. \cite{LezamaGallego}).
\begin{definition}\label{sigmapbwderivationtype}
Let $A$ be a skew $PBW$ extension.
\begin{enumerate}
\item[\rm (a)]
$A$ is quasi-commutative if the conditions {\rm(}iii{\rm)} and
{\rm(}iv{\rm)} in Definition \ref{gpbwextension} are replaced by
\begin{enumerate}
\item[\rm (iii')]For every $1\leq i\leq n$ and $r\in R-\{0\}$ there exists $c_{i,r}\in R-\{0\}$ such that
\begin{equation}
x_ir=c_{i,r}x_i.
\end{equation}
\item[\rm (iv')]For every $1\leq i,j\leq n$ there exists $c_{i,j}\in R-\{0\}$ such that
\begin{equation}
x_jx_i=c_{i,j}x_ix_j.
\end{equation}
\end{enumerate}
\item[\rm (b)]$A$ is bijective if $\sigma_i$ is bijective for
every $1\leq i\leq n$ and $c_{i,j}$ is invertible for any $1\leq
i<j\leq n$.
\end{enumerate}
\end{definition}
\begin{definition}
Let $A$ be an skew $PBW$ extension of $R$ with endomorphisms $\sigma_i$, $1\leq i\leq n$, as in
Proposition \ref{sigmadefinition}.
\begin{enumerate}
\item[\rm (i)]For $\alpha=(\alpha_1,\dots,\alpha_n)\in \mathbb{N}^n$,
$\sigma^{\alpha}:=\sigma_1^{\alpha_1}\cdots \sigma_n^{\alpha_n}$,
$|\alpha|:=\alpha_1+\cdots+\alpha_n$. If
$\beta=(\beta_1,\dots,\beta_n)\in \mathbb{N}^n$, then
$\alpha+\beta:=(\alpha_1+\beta_1,\dots,\alpha_n+\beta_n)$.
\item[\rm (ii)]For $X=x^{\alpha}\in {\rm Mon}(A)$,
$\exp(X):=\alpha$ and $\deg(X):=|\alpha|$.
\item[\rm (iii)]If $f=c_1X_1+\cdots +c_tX_t$,
with $X_i\in Mon(A)$ and $c_i\in R-\{0\}$, then
$\deg(f):=\max\{\deg(X_i)\}_{i=1}^t.$
\end{enumerate}
\end{definition}
The skew $PBW$ extensions can be characterized in a similar way as
was done in \cite{Gomez-Torrecillas} for $PBW$ rings.
\begin{theorem}\label{coefficientes}
Let $A$ be a left polynomial ring over $R$ w.r.t. $\{x_1,\dots,x_n\}$. $A$ is an skew $PBW$
extension of $R$ if and only if the following conditions hold:
\begin{enumerate}
\item[\rm (a)]For every $x^{\alpha}\in {\rm Mon}(A)$ and every $0\neq
r\in R$ there exist unique elements
$r_{\alpha}:=\sigma^{\alpha}(r)\in R-\{0\}$ and $p_{\alpha ,r}\in
A$ such that
\begin{equation}\label{611}
x^{\alpha}r=r_{\alpha}x^{\alpha}+p_{\alpha , r},
\end{equation}
where $p_{\alpha ,r}=0$ or $\deg(p_{\alpha ,r})<|\alpha|$ if
$p_{\alpha , r}\neq 0$. Moreover, if $r$ is left invertible, then
$r_\alpha$ is left invertible.

\item[\rm (b)]For every $x^{\alpha},x^{\beta}\in {\rm Mon}(A)$ there
exist unique elements $c_{\alpha,\beta}\in R$ and
$p_{\alpha,\beta}\in A$ such that
\begin{equation}\label{612}
x^{\alpha}x^{\beta}=c_{\alpha,\beta}x^{\alpha+\beta}+p_{\alpha,\beta},
\end{equation}
where $c_{\alpha,\beta}$ is left invertible, $p_{\alpha,\beta}=0$
or $\deg(p_{\alpha,\beta})<|\alpha+\beta|$ if
$p_{\alpha,\beta}\neq 0$.
\end{enumerate}
\begin{proof}
See \cite{LezamaGallego} Theorem 7.
\end{proof}
\end{theorem}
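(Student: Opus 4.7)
The plan is to prove the equivalence in two directions. For the easy converse, assume (a) and (b) and specialize: part (a) with $\alpha = e_i$ gives $x_i r = \sigma^{e_i}(r) x_i + p_{e_i, r}$ with $p_{e_i, r} \in R$ (its degree is less than $1$) and $\sigma^{e_i}(r) \neq 0$ for $r \neq 0$, which is precisely Definition \ref{gpbwextension}(iii) with $c_{i,r} := \sigma^{e_i}(r)$; and part (b) with $\alpha = e_j, \beta = e_i$ for $i < j$ gives $x_j x_i = c_{e_j, e_i} x_i x_j + p_{e_j, e_i}$ with $p_{e_j, e_i}$ of degree at most $1$ and $c_{e_j, e_i}$ nonzero (being left invertible), which recovers Definition \ref{gpbwextension}(iv).

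For the forward direction, I would prove (a) by induction on $|\alpha|$. The base case $|\alpha| = 1$ is Proposition \ref{sigmadefinition}: the formula $x_i r = \sigma_i(r) x_i + \delta_i(r)$ has the desired shape with $r_{e_i} = \sigma^{e_i}(r) \in R - \{0\}$ and $p_{e_i, r} = \delta_i(r) \in R$. For the inductive step, I choose the smallest index $i$ with $\alpha_i > 0$, factor $x^\alpha = x_i \cdot x^{\alpha - e_i}$, apply the inductive hypothesis to push $r$ through $x^{\alpha - e_i}$, and use the base case once more to push $r$ through $x_i$. The leading contribution collects as $\sigma_i(\sigma^{\alpha - e_i}(r))\, x_i \, x^{\alpha - e_i} = \sigma^\alpha(r)\, x^\alpha$ (using that $\alpha_1 = \cdots = \alpha_{i-1} = 0$); all remainders have degree strictly less than $|\alpha|$, provided one carefully straightens every arising non-standard monomial $x_i \cdot x^\gamma$ back into ${\rm Mon}(A)$ using Definition \ref{gpbwextension}(iv). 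Each such swap produces an error term in $R + Rx_1 + \cdots + Rx_n$ and therefore cannot raise the total degree beyond $|\gamma| + 1 \leq |\alpha| - 1$. Uniqueness of $r_\alpha$ and $p_{\alpha, r}$ is forced by the basis property of ${\rm Mon}(A)$, and if $r$ is left invertible with $sr = 1$ then $\sigma^\alpha(s) \sigma^\alpha(r) = \sigma^\alpha(1) = 1$, so $r_\alpha = \sigma^\alpha(r)$ is left invertible.

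For (b), I would argue by a parallel induction on $|\alpha|$, the case $|\alpha| = 0$ being trivial with $c_{0,\beta} = 1$ and $p_{0,\beta} = 0$. Writing $x^\alpha x^\beta = x_i(x^{\alpha - e_i} x^\beta)$ with $i$ the smallest index such that $\alpha_i > 0$, the inductive hypothesis combined with (a) delivers an expression of the form $\sigma_i(c_{\alpha - e_i, \beta})\, x_i \, x^{\alpha - e_i + \beta} + (\text{terms of degree} < |\alpha + \beta|)$, and then one straightens $x_i \, x^{\alpha - e_i + \beta}$ into the standard monomial $x^{\alpha + \beta}$ using Definition \ref{gpbwextension}(iv). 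This picks up $c_{\alpha, \beta}$ as an accumulated product of commutation constants $c_{k,\ell}$ together with their $\sigma$-images, while every error term remains of degree strictly below $|\alpha + \beta|$. The main technical obstacle is the bookkeeping in this accounting: verifying that $c_{\alpha,\beta}$ is left invertible reduces to the observation that $\sigma_i$ preserves left invertibility (being an injective ring endomorphism sending $1$ to $1$), and verifying the degree bound reduces to the fact that each relation in (iv) has right-hand side of degree at most $1$. Both points are routine once the induction is set up correctly, and together they complete the argument --- see Theorem 7 of \cite{LezamaGallego} for the detailed verification.
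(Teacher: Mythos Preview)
The paper gives no proof of its own here---it merely cites Theorem~7 of \cite{LezamaGallego}---so there is nothing to compare against beyond noting that your outline is precisely the standard inductive argument that reference carries out: specialize (a) and (b) to $\alpha=e_i$ and $(\alpha,\beta)=(e_j,e_i)$ for the converse, and for the forward direction peel off the leftmost variable $x_i$ from $x^\alpha$ and induct on $|\alpha|$.

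Your sketch is correct. The one point you pass over a bit quickly is why the accumulated coefficient $c_{\alpha,\beta}$ is \emph{left invertible} rather than merely nonzero: Definition~\ref{gpbwextension}(iv) only asserts $c_{i,j}\in R-\{0\}$. The missing observation is that (iv) is stated for \emph{every} pair $1\le i,j\le n$, so one has both $x_jx_i-c_{i,j}x_ix_j\in R+Rx_1+\cdots+Rx_n$ and $x_ix_j-c_{j,i}x_jx_i\in R+Rx_1+\cdots+Rx_n$; substituting one into the other and using that $x_ix_j$ is an $R$-basis element forces $c_{j,i}c_{i,j}=1$. Once each $c_{i,j}$ is known to be left invertible, your remark that the injective endomorphisms $\sigma_i$ preserve left invertibility finishes the job, since $c_{\alpha,\beta}$ is a finite product of elements of the form $\sigma^{\gamma}(c_{k,\ell})$.
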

We remember also the following facts from \cite{LezamaGallego}
Remark 8.
\begin{remark}\label{identities}
(i) A left inverse of $c_{\alpha,\beta}$ will be denoted by $c_{\alpha,\beta}'$. We observe that if
$\alpha=0$ or $\beta=0$, then $c_{\alpha,\beta}=1$ and hence $c_{\alpha,\beta}'=1$.

(ii) Let $\theta,\gamma,\beta\in \mathbb{N}^n$ and $c\in R$. Then we have the following identities:
\begin{center}
$\sigma^\theta(c_{\gamma,\beta})c_{\theta,\gamma+\beta}=c_{\theta,\gamma}c_{\theta+\gamma,\beta}$,

$\sigma^\theta(\sigma^\gamma (c))c_{\theta,\gamma}=c_{\theta,\gamma}\sigma^{\theta+\gamma}(c)$.
\end{center}
(iii) We observe that if $A$ is quasi-commutative, then $p_{\alpha,r}=0$ and $p_{\alpha,\beta}=0$
for every $0\neq r\in R$ and every $\alpha,\beta \in \mathbb{N}^n$.

(iv) If $A$ is bijective, then $c_{\alpha,\beta}$ is invertible for any $\alpha,\beta\in
\mathbb{N}^n$.

(v) In $Mon(A)$ we define
\begin{center}
$x^{\alpha}\succeq x^{\beta}\Longleftrightarrow
\begin{cases}
x^{\alpha}=x^{\beta}\\
\text{or} & \\
x^{\alpha}\neq x^{\beta}\, \text{but} \, |\alpha|> |\beta| & \\
\text{or} & \\
x^{\alpha}\neq x^{\beta},|\alpha|=|\beta|\, \text{but $\exists$ $i$ with} &
\alpha_1=\beta_1,\dots,\alpha_{i-1}=\beta_{i-1},\alpha_i>\beta_i.
\end{cases}$
\end{center}
It is clear that this is a total order on $Mon(A)$. If $x^{\alpha}\succeq x^{\beta}$ but
$x^{\alpha}\neq x^{\beta}$, we write $x^{\alpha}\succ x^{\beta}$. Each element $f\in A$ can be
represented in a unique way as $f=c_1x^{\alpha_1}+\cdots +c_tx^{\alpha_t}$, with $c_i\in R-\{0\}$,
$1\leq i\leq t$, and $x^{\alpha_1}\succ \cdots \succ x^{\alpha_t}$. We say that $x^{\alpha_1}$ is
the \textit{leader monomial} of $f$ and we write $lm(f):=x^{\alpha_1}$ ; $c_1$ is the
\textit{leader coefficient} of $f$, $lc(f):=c_1$, and $c_1x^{\alpha_1}$ is the \textit{leader term}
of $f$ denoted by $lt(f):=c_1x^{\alpha_1}$.
 We observe that
\begin{center}
$x^{\alpha}\succ x^{\beta}\Rightarrow
lm(x^{\gamma}x^{\alpha}x^{\lambda})\succ
lm(x^{\gamma}x^{\beta}x^{\lambda})$, for every
$x^{\gamma},x^{\lambda}\in Mon(A)$.
\end{center}
\end{remark}
A natural and useful result that we will use later is the
following property.
\begin{proposition}\label{1.1.10a}
Let $A$ be a bijective skew $PBW$ extension of a ring $R$. Then,
$A_R$ is free with basis ${\rm Mon}(A)$.
\end{proposition}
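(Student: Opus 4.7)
The plan is to show that the natural map $\phi\colon R^{(\mathrm{Mon}(A))}\to A$ sending $(r_\alpha)_\alpha\mapsto \sum_\alpha x^\alpha r_\alpha$ is bijective when viewed as a map of right $R$-modules. The key ingredient is bijectivity of $A$: since every $\sigma_i$ is bijective, so is $\sigma^\alpha=\sigma_1^{\alpha_1}\cdots\sigma_n^{\alpha_n}$ for every $\alpha\in\mathbb{N}^n$.

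For surjectivity, I would proceed by induction on the total order $\succ$ of Remark \ref{identities}(v) (equivalently, on the degree in $\mathrm{Mon}(A)$). Given $c\in R-\{0\}$ and $x^\alpha\in\mathrm{Mon}(A)$, set $r:=(\sigma^\alpha)^{-1}(c)\in R-\{0\}$. Then by Theorem \ref{coefficientes}(a),
\[
x^\alpha r = \sigma^\alpha(r)x^\alpha + p_{\alpha,r} = c\,x^\alpha + p_{\alpha,r},
\]
with $p_{\alpha,r}=0$ or $\deg(p_{\alpha,r})<|\alpha|$. Hence $c\,x^\alpha = x^\alpha r - p_{\alpha,r}$, and by the inductive hypothesis $p_{\alpha,r}$ is already a right $R$-linear combination of $\mathrm{Mon}(A)$. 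Summing over the (finitely many) left-basis terms of an arbitrary $f\in A$, we obtain an expression of $f$ as a right $R$-linear combination of standard monomials.

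For injectivity, suppose $\sum_\alpha x^\alpha r_\alpha = 0$ with all but finitely many $r_\alpha$ zero, but some $r_{\alpha_0}\neq 0$; choose $\alpha_0$ \emph{maximal} with respect to $\succ$ among those indices with $r_\alpha\neq 0$. Expanding each term in the left basis via Theorem \ref{coefficientes}(a),
\[
x^\alpha r_\alpha = \sigma^\alpha(r_\alpha)x^\alpha + p_{\alpha,r_\alpha},
\]
with $\deg(p_{\alpha,r_\alpha})<|\alpha|$ when nonzero. For every $\alpha\neq\alpha_0$ contributing to the sum we have $\alpha\prec\alpha_0$, so $|\alpha|\leq|\alpha_0|$ and therefore every monomial of $p_{\alpha,r_\alpha}$ has total degree strictly less than $|\alpha_0|$, hence is $\prec x^{\alpha_0}$; also $\sigma^\alpha(r_\alpha)x^\alpha$ contributes only to the monomial $x^\alpha\prec x^{\alpha_0}$. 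Consequently, the coefficient of $x^{\alpha_0}$ in the left-basis expansion of $\sum_\alpha x^\alpha r_\alpha$ is exactly $\sigma^{\alpha_0}(r_{\alpha_0})$. Since $\mathrm{Mon}(A)$ is a left $R$-basis, this coefficient must vanish; injectivity of $\sigma^{\alpha_0}$ (Proposition \ref{sigmadefinition}) then forces $r_{\alpha_0}=0$, a contradiction.

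The only delicate point is the leading-term bookkeeping in the injectivity argument: one must be sure that no lower-index term $x^\beta r_\beta$ with $|\beta|=|\alpha_0|$ but $\beta\prec\alpha_0$ can produce an $x^{\alpha_0}$ contribution. This is guaranteed because such a $\beta$ contributes $\sigma^\beta(r_\beta)x^\beta$ (a different monomial) together with a tail of strictly smaller total degree, and our total order puts monomials of the same degree strictly below $x^{\alpha_0}$ when they differ from it. Everything else is a direct application of Theorem \ref{coefficientes} together with the bijectivity of each $\sigma^\alpha$.
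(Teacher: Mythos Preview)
Your argument is correct and follows essentially the same route as the paper's proof: both use Theorem~\ref{coefficientes}(a) together with bijectivity of $\sigma^{\alpha}$ to rewrite $cx^{\alpha}$ as $x^{\alpha}(\sigma^{\alpha})^{-1}(c)$ modulo lower-degree terms for surjectivity, and both extract the leading left-basis coefficient $\sigma^{\alpha_0}(r_{\alpha_0})$ (where $x^{\alpha_0}$ is the $\succ$-maximal monomial appearing) and use injectivity of $\sigma^{\alpha_0}$ for linear independence. Your treatment of the leading-term bookkeeping is in fact more explicit than the paper's, which simply asserts that $\sigma^{\alpha_1}(r_1)=0$ without spelling out why no lower term contributes to $x^{\alpha_1}$.
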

\begin{proof}
First note that $A_R$ is a module where the product $f\cdot r $ is
defined by the multiplication in $A$: $f\cdot r :=fr$, $f\in A$,
$r\in R$. We prove next that ${\rm Mon}(A)$ is a system of
generators of $A$. Let $f\in A$, then $f$ is a finite summa of
terms like $rx^{\alpha}$, with $r\in R$ and $x^{\alpha}\in {\rm
Mon}(A)$, so it is enough to prove that each of these terms is a
right linear $R$-combination of elements of ${\rm Mon}(A)$. From Theorem
\ref{coefficientes},
$rx^{\alpha}=x^{\alpha}\sigma^{-\alpha}(r)-p_{\alpha,\sigma^{-\alpha}(r)}$,
with $\deg(p_{\alpha,\sigma^{-\alpha}(r)})<|\alpha|$ if
$p_{\alpha,\sigma^{-\alpha}(r)}\neq 0$, so by induction on
$|\alpha|$ we get the result.

Now we will show that $Mon(A)$ is linearly independent: let
$x^{\alpha_1}r_1+\cdots x^{\alpha_t}r_t=0$, with
$x^{\alpha_1}\succ \cdots \succ x^{\alpha_t}$ for the total order
$\succeq$ on $Mon(A)$ defined in the previous remark, then
$\sigma^{\alpha_1}(r_1)x^{\alpha_1}+p_{\alpha_1,r_1}+\cdots
+\sigma^{\alpha_t}(r_t)x^{\alpha_t}+p_{\alpha_t,r_t}=0$, with
$\deg(p_{\alpha_i,r_i})<|\alpha_i|$ if $p_{\alpha_i,r_i}\neq 0$,
$1\leq i\leq t$; hence, $\sigma^{\alpha_1}(r_1)=0$ and from this
$r_1=0$. By induction on $t$ we obtain the result.
\end{proof}
\section{Key theorems} In this section we prove some key results of the paper. We start with the
following proposition that establishes that one can construct a quasi-commutative skew $PBW$
extension from a given skew $PBW$ extension of a ring $R$.
\begin{proposition}\label{associatedpbw}
Let $A$ be an skew $PBW$ extension of $R$. Then, there exists a quasi-commutative skew $PBW$
extension $A^{\sigma}$ of $R$ in $n$ variables $z_1,\dotsc,z_n$ defined by
\[
z_ir=c_{i,r}z_i,\ \ z_jz_i=c_{i,j}z_iz_j,\ \ 1\le i,j\le n,
\]
where $c_{i,r},c_{i,j}$ are the same constants that define $A$. Moreover, if $A$ is bijective then $A^{\sigma}$ is also bijective.
\begin{proof}
We consider $n$ variables $z_1,\dots,z_n$ and the set of standard
monomials $\mathcal{M}:=\{z_1^{\alpha_1}\cdots
z_n^{\alpha_n}\mid \alpha_i\in \mathbb{N}^n ,1\leq i\leq n\}$. Let
$A^{\sigma}$ be the free $R$-module with basis $\mathcal{M}$
(i.e., $A$ and $A^{\sigma}$ are isomorphic $R$-modules). We
define the product in $A^{\sigma}$ by the distributive law and the
rules
\begin{center}
$rz^{\alpha}sz^{\beta}:=r\sigma^{\alpha}(s)c_{\alpha,\beta}z^{\alpha+\beta}$,
\end{center}
where the $\sigma$'s and the constants $c$'s are as in Theorem
\ref{coefficientes}. The identities of Remark \ref{identities}
show that this product is associative, moreover note that
$R\subseteq A^{\sigma}$ since for $r\in R$, $r=rz_1^{0}\cdots
z_n^{0}$. Thus, $A^{\sigma}$ is a quasi-commutative skew $PBW$
extension of $R$, and also, each element $f^{\sigma}$ of
$A^{\sigma}$ corresponds to a unique element $f\in A$, replacing
the variables $x$'s  by the variables $z$'s. The last assertion of
the proposition is obvious.
\end{proof}
\end{proposition}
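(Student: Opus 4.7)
The plan is to take $A^\sigma$ to be the free left $R$-module on the abstract symbols $\mathcal{M}:=\{z^\alpha:=z_1^{\alpha_1}\cdots z_n^{\alpha_n}\mid \alpha\in\mathbb{N}^n\}$, and to equip it with the bilinear multiplication generated by the rule
\[
rz^\alpha\cdot sz^\beta := r\,\sigma^\alpha(s)\,c_{\alpha,\beta}\,z^{\alpha+\beta},
\]
where $\sigma^\alpha$ and $c_{\alpha,\beta}$ are the data supplied by Theorem \ref{coefficientes} for the original $A$. The stated relations $z_ir=c_{i,r}z_i$ and $z_jz_i=c_{i,j}z_iz_j$ are immediate specializations of this formula, and Remark \ref{identities}(i) gives $c_{\alpha,0}=c_{0,\beta}=1$, so $1=z^0$ is a two-sided unit and the map $r\mapsto rz^0$ embeds $R$ into $A^\sigma$ as a subring.

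The main obstacle is associativity of the product, since every other axiom is visibly built into the construction. I would expand $(rz^\alpha\cdot sz^\beta)\cdot tz^\gamma$ to obtain the coefficient $r\sigma^\alpha(s)c_{\alpha,\beta}\sigma^{\alpha+\beta}(t)c_{\alpha+\beta,\gamma}$ of $z^{\alpha+\beta+\gamma}$, and $rz^\alpha\cdot(sz^\beta\cdot tz^\gamma)$ to obtain the coefficient $r\sigma^\alpha(s)\sigma^\alpha(\sigma^\beta(t))\sigma^\alpha(c_{\beta,\gamma})c_{\alpha,\beta+\gamma}$. The two identities recorded in Remark \ref{identities}(ii) are tailor-made for bridging these: the second one (with $\theta=\alpha$ and $c=t$) yields $c_{\alpha,\beta}\sigma^{\alpha+\beta}(t)=\sigma^\alpha(\sigma^\beta(t))c_{\alpha,\beta}$, and the first (with the indices relabeled so that its $\gamma,\beta$ play the roles of our $\beta,\gamma$) yields $c_{\alpha,\beta}c_{\alpha+\beta,\gamma}=\sigma^\alpha(c_{\beta,\gamma})c_{\alpha,\beta+\gamma}$. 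Chaining these two substitutions converts the first coefficient into the second, establishing associativity.

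Once associativity is in hand, $A^\sigma$ is a ring, $\mathcal{M}$ is by construction a left $R$-basis, and the displayed relations are precisely conditions (iii') and (iv') of Definition \ref{sigmapbwderivationtype}(a), so $A^\sigma$ is a quasi-commutative skew $PBW$ extension of $R$. For the final assertion, the endomorphisms of $R$ and the structure constants $c_{i,j}$ attached to $A^\sigma$ are literally the same data attached to $A$; hence bijectivity of every $\sigma_i$ and invertibility of every $c_{i,j}$ for $i<j$ transfer verbatim from $A$ to $A^\sigma$.
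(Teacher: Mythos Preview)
Your proof is correct and follows essentially the same approach as the paper: both construct $A^\sigma$ as the free left $R$-module on the standard monomials in new variables $z_1,\dots,z_n$, define the product by the rule $rz^\alpha\cdot sz^\beta:=r\sigma^\alpha(s)c_{\alpha,\beta}z^{\alpha+\beta}$, and invoke the identities of Remark~\ref{identities} for associativity. The only difference is that you spell out explicitly which of the two identities in Remark~\ref{identities}(ii) is used at each step of the associativity check, whereas the paper simply asserts that those identities do the job.
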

The first key theorem computes the graduation of a general skew
$PBW$ extension of a ring $R$.
\begin{theorem}\label{1.3.2}
Let $A$ be an arbitrary skew $PBW$ extension of $R$. Then, $A$ is
a filtered ring with filtration given by
\begin{equation}\label{eq1.3.1a}
F_m:=\begin{cases} R & {\rm if}\ \ m=0\\ \{f\in A\mid {\rm deg}(f)\le m\} & {\rm if}\ \ m\ge 1
\end{cases}
\end{equation}
and the corresponding graded ring $Gr(A)$ is a quasi-commutative
skew $PBW$ extension of $R$. Moreover, if $A$ is bijective, then
$Gr(A)$ is a quasi-commutative bijective skew $PBW$ extension of
$R$.
\begin{proof}
Let $A:=\sigma(R)\langle x_1,\dots,x_n\rangle$, with $\sigma_i$ as
in Proposition \ref{sigmadefinition}, and let $c_{ij}$ be the
system of constants as in Definition \ref{sigmapbwderivationtype}.
Let $A^\sigma$ the associated quasi-commutative skew $PBW$
extension defined in Proposition \ref{associatedpbw}.

Associated to filtration (\ref{eq1.3.1a}) we define the graded
ring $Gr(A)$ by
\begin{align*}
Gr(A):=\bigoplus_{m\geq 0}F_{m}/F_{m-1}
\end{align*}
with product given by
\begin{align*}
F_{n}/F_{n-1}\times F_{m}/F_{m-1}&\to F_{n+m}/F_{n+m-1}\\
(a+F_{n-1}, b+F_{m-1})&\mapsto ab+F_{n+m-1}.
\end{align*}
We will prove next that
\begin{equation}\label{eq1.2.1}
Gr(A)\cong A^{\sigma}.
\end{equation}
Let $a\in Gr(A)$, then $a=\oplus_{m\geq 0}\overline{a_{m}}$, where
$\overline{a_{m}}=a_{m}+F_{m-1}$, $a_{m}\in F_{m}$ and
$\overline{a_{m}}\neq \overline{0}$ only for a finite subset of
integers $m$; without loss of generality we can assume that if
$\overline{a_{m}}\neq \overline{0}$ then $a_m$ is an homogeneous
polynomial of degree $m$ presented in the standard form, i.e.,
$a_{m}=\sum_{\alpha\in \mathbb{N}^{n}}c_{\alpha}x^{\alpha}$, where
$|\alpha|=m$ for such $\alpha$ with $c_{\alpha}\neq 0$. Then, we
define
\begin{align*}
\phi: Gr(A)&\to A^{\sigma}\\
\oplus_{m\geq 0}\overline{a_{m}}&\mapsto
\sum_{\overline{a_{m}}\neq \overline{0}}\,a_{m},
\end{align*}

\textit{$\phi$ is well defined}: suppose that $a=\oplus_{m\geq
0}\overline{a_{m}}=\oplus_{m\geq 0}\overline{a_{m}'}$, let $m$
such that $\overline{a_{m}}\neq \overline{0}\neq
\overline{a_{m}'}$, since $\overline{a_{m}}=\overline{a_{m}'}$
then $a_{m}-a_{m}'\in F_{m-1}$; if $a_{m}-a_{m}'\neq 0$ then
$0\leq$ deg$(a_{m}-a_{m'})\leq m-1$ but this is impossible since
$a_{m}$ and $a_{m}'$ are homogeneous of degree $m$, so
$a_{m}=a_{m}'$ and $\sum_{\overline{a_{m}}\neq
\overline{0}}\,a_{m}=\sum_{\overline{a_{m}'}\neq
\overline{0}}\,a_{m}'$.

\textit{$\phi$ is additive}: let $a=\oplus \overline{a_{m}}$,
$b=\oplus \overline{b_{m}}\in Gr(A)$, then $a+b=\oplus
\overline{a_{m}}+\oplus
\overline{b_{m}}=\oplus\overline{a_{m}+b_{m}}$; so
\begin{align*}
\phi(a+b)&=\phi(\oplus\overline{a_{m}+b_{m}})
=\sum_{\overline{a_{m}+b_{m}}\neq \overline{0}}\,a_{m}+b_{m}
=\sum_{\overline{a_{m}}\neq
\overline{0}}\,a_{m}+\sum_{\overline{b_{m}}\neq
\overline{0}}\,b_{m}= \phi(a)+\phi(b).
\end{align*}
\textit{$\phi$ is surjective}: let $f\in A^{\sigma}$, then
$f=f_{0}+f_{1}+\cdots+f_{k}$, where $k:=\deg(f)$ and $f_{i}$ is an
homogeneous polynomial of degree $i$, $0\leq i\leq k$ (if $f=0$,
then $\phi(0)=0$). Let $a=\oplus \overline{a_{m}}$, with
$a_{m}:=f_{m}$ if $0\leq m\leq k$ and $a_{m}=0$ otherwise. Then
\begin{align*}
\phi(\oplus \overline{a_{m}})=& a_{0}+a_{1}+\cdots+
a_{k}=f_{0}+f_{1}+\cdots+ f_{k}=f.
\end{align*}
\textit{$\phi$ is injective}: let $a=\oplus \overline{a_{m}}\in
Gr(A)$ such that $\phi(a)=0$, then we can represent $a$ as
$a=\overline{a_{i_{1}}}+\overline{a_{i_{2}}}+\cdots+\overline{a_{i_{s}}}$,
with $a_{i_{j}}\in F_{i_{j}}\setminus F_{i_{j}-1}$ for $1\leq
j\leq s$. Thus,
\begin{align*}
0=\phi(a)=a_{i_{1}}+a_{i_{2}}+\cdots+a_{i_{s}},
\end{align*}
and hence $a_{i_{j}}=0$ for every $1\leq j\leq s$, i.e., $a=0$.

\textit{$\phi$ is multiplicative}: By Remark \ref{identities}
(iii), we have
\begin{align}\label{621a}
&z^{\alpha}r=\sigma^{\alpha}(r)z^{\alpha}\ \text{for all}\ \alpha
\in \mathbb{N}^{n}\ \text{and}\ r\in R,\\
\label{622a} &z^{\alpha}z^{\beta}=c_{\alpha,\beta}z^{\alpha+\beta}
\ \text{for all}\ \alpha, \beta\in \mathbb{N}^{n}.
\end{align}
Let $a=\oplus_{m\geq 0}\overline{a_{m}}$ and $b=\oplus_{m\geq
0}\overline{b_{m}}$ in $Gr(A)$, then $ab=\oplus_{k\geq
0}\overline{c_{k}}$, where
\begin{align*}
\overline{c_{k}}&=\sum_{i+j=k}\overline{a_{i}}\overline{b_{j}}\\
&=\overline{a_{0}}\overline{b_{k}}+\overline{a_{1}}\overline{b_{k-1}}+\cdots+\overline{a_{k}}\overline{b_{0}}
\end{align*}
is in $F_{k}/F_{k-1}$ and $a_{i}$, $b_{j}$ are homogeneous
polynomials of degree $i$ and $j$ respectively. Since $\phi$ is
additive we only need to establish that
$\phi(\overline{a}\overline{b})=\phi(\overline{a})\phi(\overline{b})$
with $a$ and $b$ homogeneous polynomials of degree $l$ and $m$
respectively. Let
\begin{align*}
a&=c_{\alpha_{1}}x^{\alpha_{1}}+c_{\alpha_{2}}x^{\alpha_{2}}+\cdots+c_{\alpha_{r}}x^{\alpha_{r}}\\
b&=d_{\beta_{1}}x^{\beta_{1}}+d_{\beta_{2}}x^{\beta_{2}}+\cdots+d_{\beta_{s}}x^{\beta_{s}}
\end{align*}
with $|\alpha_{i}|=l$ y $|\beta_{j}|=m$ for $1\leq i\leq r$ and
$1\leq j\leq s$. Then, $ab\in F_{l+m}$ and
\begin{align*}
ab=&c_{\alpha_{1}}\sigma^{\alpha_{1}}(d_{\beta_{1}})c_{\alpha_{1},\beta_{1}}x^{\alpha_{1}+\beta_{1}}
+c_{\alpha_{1}}\sigma^{\alpha_{1}}(d_{\beta_{2}})c_{\alpha_{1},\beta_{2}}
x^{\alpha_{1}+\beta_{2}}+\cdots+\\
&c_{\alpha_{1}}\sigma^{\alpha_{1}}(d_{\beta_{s}})c_{\alpha_{1},\beta_{s}}x^{\alpha_{1}+\beta_{s}}+
\cdots+c_{\alpha_{r}}\sigma^{\alpha_{r}}(d_{\beta_{1}})c_{\alpha_{r},\beta_{1}}x^{\alpha_{r}+\beta_{1}}
+\\&
c_{\alpha_{r}}\sigma^{\alpha_{r}}(b_{\beta_{2}})c_{\alpha_{r},\beta_{2}}x^{\alpha_{r}+\beta_{2}}+\cdots+
c_{\alpha_{r}}\sigma^{\alpha_{r}}(b_{\beta_{s}})c_{\alpha_{r},\beta_{s}}x^{\alpha_{r}+\beta_{s}}+q,
\end{align*}
where $q\in A$ with $q=0$ or $\deg(q)<l+m$. From (\ref{621a}) and
(\ref{622a}) we obtain
\begin{align*}
\phi(\overline{a})\phi(\overline{b})=&(c_{\alpha_{1}}z^{\alpha_{1}}+
\cdots+c_{\alpha_{r}}z^{\alpha_{r}})(d_{\beta_{1}}z^{\beta_{1}}+\cdots+d_{\beta_{s}}z^{\beta_{s}})\\
=&c_{\alpha_{1}}z^{\alpha_{1}}d_{\beta_{1}}z^{\beta_{1}}+
\cdots+c_{\alpha_{1}}z^{\alpha_{1}}d_{\beta_{s}}z^{\beta_{s}}+\cdots+\\
&c_{\alpha_{r}}z^{\alpha_{r}}d_{\beta_{1}}z^{\beta_{1}}+ \cdots+
c_{\alpha_{r}}z^{\alpha_{r}}b_{\beta_{s}}z^{\beta_{s}}\\
=&c_{\alpha_{1}}\sigma^{\alpha_{1}}(d_{\beta_{1}})c_{\alpha_{1},\beta_{1}}z^{\alpha_{1}+\beta_{1}}
+\cdots+c_{\alpha_{1}}\sigma^{\alpha_{1}}(d_{\beta_{s}})c_{\alpha_{1},\beta_{s}}z^{\alpha_{1}+\beta_{s}}+\cdots+\\
&
c_{\alpha_{r}}\sigma^{\alpha_{r}}(d_{\beta_{1}})c_{\alpha_{r},\beta_{1}}z^{\alpha_{r}+\beta_{1}}
+\cdots+
c_{\alpha_{r}}\sigma^{\alpha_{r}}(b_{\beta_{s}})c_{\alpha_{r},\beta_{s}}z^{\alpha_{r}+\beta_{s}}\\
= & \phi(\overline{a}\,\overline{b}).
\end{align*}
Finally, we observe that $\phi(1)=1$. The last statement of the
lemma follows from Proposition \ref{associatedpbw}.
\end{proof}
\end{theorem}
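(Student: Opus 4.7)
My plan is to split the theorem into two pieces: first I would check that $\{F_m\}_{m\ge 0}$ really is a ring filtration on $A$ (compatibility with addition and multiplication, exhaustion, and $F_0=R$), and then I would produce an explicit isomorphism between the associated graded ring $Gr(A)$ and the quasi-commutative extension $A^{\sigma}$ supplied by Proposition \ref{associatedpbw}. Because Proposition \ref{associatedpbw} already furnishes a quasi-commutative skew $PBW$ extension of $R$ which is bijective whenever $A$ is, transporting structure along such an isomorphism simultaneously gives the quasi-commutativity of $Gr(A)$ and the bijectivity claim in the last sentence of the theorem.

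For the filtration, the containments $R=F_0\subseteq F_1\subseteq F_2\subseteq\cdots$ are built into the definition, and $\bigcup_m F_m=A$ follows because every $f\in A$ is, by Definition \ref{gpbwextension}(ii), a finite $R$-linear combination of standard monomials and therefore has a well-defined finite degree. The only nontrivial filtration axiom is $F_n F_m\subseteq F_{n+m}$, which reduces by $R$-linearity to checking that a single product $(rx^{\alpha})(sx^{\beta})$ with $|\alpha|\le n$ and $|\beta|\le m$ lies in $F_{n+m}$. This is exactly the content of Theorem \ref{coefficientes}: the relations $x^{\alpha}s=\sigma^{\alpha}(s)x^{\alpha}+p_{\alpha,s}$ and $x^{\alpha}x^{\beta}=c_{\alpha,\beta}x^{\alpha+\beta}+p_{\alpha,\beta}$, with the remainders of strictly smaller degree, force the product into $F_{|\alpha|+|\beta|}$.

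To build $Gr(A)\cong A^{\sigma}$, I would define $\phi\colon Gr(A)\to A^{\sigma}$ by sending the class $\overline{a_m}=a_m+F_{m-1}$ of a homogeneous degree-$m$ polynomial to the element of $A^{\sigma}$ obtained by replacing each $x_i$ by $z_i$ in the standard-form expansion of $a_m$, then extending additively. Well-definedness, additivity, surjectivity and injectivity are all clean: by the total order and standard-form uniqueness from Remark \ref{identities}(v), any two representatives of the same class in $F_m/F_{m-1}$ must differ by something of degree $<m$, but homogeneous polynomials of degree $m$ differing by such an element must coincide. The step where I expect the actual work is \textbf{multiplicativity}: for homogeneous $a\in F_l$ and $b\in F_m$ the product $ab$ in $A$ produces, via Theorem \ref{coefficientes}, both the ``expected'' degree-$(l+m)$ terms $c_{\alpha_i}\sigma^{\alpha_i}(d_{\beta_j})c_{\alpha_i,\beta_j}x^{\alpha_i+\beta_j}$ and various lower-order remainders coming from the $p_{\alpha,s}$'s and $p_{\alpha,\beta}$'s. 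The key observation that makes everything collapse is that these remainders all have degree strictly less than $l+m$ and therefore die in the quotient $F_{l+m}/F_{l+m-1}$, while the surviving leading terms are exactly what the defining relations $z^{\alpha}s=\sigma^{\alpha}(s)z^{\alpha}$ and $z^{\alpha}z^{\beta}=c_{\alpha,\beta}z^{\alpha+\beta}$ of $A^{\sigma}$ produce.

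Once $\phi$ is verified to be a ring isomorphism, the conclusion is immediate: $Gr(A)$ is isomorphic to the quasi-commutative skew $PBW$ extension $A^{\sigma}$, and by the last sentence of Proposition \ref{associatedpbw} this $A^{\sigma}$ is bijective whenever $A$ is. The only genuine obstacle in this plan is the careful bookkeeping in the multiplicativity step; all remaining checks are formal consequences of the uniqueness of standard forms.
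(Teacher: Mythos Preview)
Your proposal is correct and follows essentially the same route as the paper: you construct the explicit isomorphism $\phi\colon Gr(A)\to A^{\sigma}$ on homogeneous representatives, verify well-definedness, additivity, bijectivity, and then reduce multiplicativity to the observation that the lower-order remainders $p_{\alpha,s}$ and $p_{\alpha,\beta}$ from Theorem~\ref{coefficientes} die in $F_{l+m}/F_{l+m-1}$, leaving exactly the $A^{\sigma}$ product. Your explicit verification of the filtration axiom $F_nF_m\subseteq F_{n+m}$ is a useful addition that the paper leaves implicit.
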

The next theorem characterizes the quasi-commutative skew $PBW$
extensions.
\begin{theorem}\label{1.3.3}
Let $A$ be a quasi-commutative skew $PBW$ extension of a ring $R$.
Then,
\begin{enumerate}
\item[\rm (i)] $A$ is isomorphic to an iterated skew polynomial ring of
endomorphism type.
\item[\rm (ii)] If $A$ is bijective, then each
endomorphism is bijective.
\end{enumerate}
\begin{proof}
(i) Let $A:=\sigma(R)\langle x_1,\dots,x_n\rangle$, with
$\sigma_i$ as in Proposition \ref{sigmadefinition} (recall that
for quasi-commutative extensions each $\delta_i=0$), and let
$c_{ij}$ be the system of constants as in Definition
\ref{sigmapbwderivationtype}. Using the universal property of skew
polynomial rings (see \cite{McConnell}), we will construct the
skew polynomial ring $B:=R[z_1;\theta_1]\cdots [z_{n};\theta_n]$,
where
\begin{equation}
\left\{
\begin{aligned}
& \theta_1:=\sigma_1;\\
& \theta_j: R[z_1;\theta_1]\cdots R[z_{j-1};\theta_{j-1}]\to
R[z_1;\theta_1]\cdots R[z_{j-1};\theta_{j-1}],\\
& \theta_j(z_i):=c_{i,j}z_i, 1\leq i<j\leq n,\
\theta_j(r):=\sigma_j(r), \ \text{for} \ r\in R,
\end{aligned}
\right.
\end{equation}
and we will see that $A\cong B$.

For $n=1$, $A\cong R[z_1;\theta_1]$, with $\theta_1:=\sigma_1$. In
fact, in $A$ we have the element $y:=x_1$ that satisfies
$yg_1(r)=g_1(\theta_1(r))y$, where $g_1:R\to A$ is the ring
homomorphism defined by $g_1(r):=r$, for each $r\in R$. By the
universal property of $R[z_1;\theta_1]$ there exists a ring
homomorphism
\begin{center}
$\widetilde{g_1}:R[z_1;\theta_1]\to A$,
$\widetilde{g_1}(z_1):=y=x_1$, $\widetilde{g_1}(r):=g_1(r)=r$,
$r\in R$, i.e., $\widetilde{g_1}(p(z_1))=p(x_1)$.
\end{center}
Since $A$ is a free $R$-module, then $\widetilde{g_1}$ is an
isomorphism.

Let $n=2$. We have the ring homomorphism
\begin{center}
$f_2:R\to R[z_1;\theta_1]$, $f_2(r):=\sigma_2(r)$
\end{center}
and we set $y:=c_{1,2}z_1\in R[z_1;\theta_1]$; note that
$yf_2(r)=f_2(\theta_1(r))y$, for each $r\in R$. In fact,
$yf_2(r)=c_{1,2}z_1\sigma_2(r)=c_{1,2}\theta_1(\sigma_2(r))z_1=c_{1,2}\sigma_1\sigma_2(r)z_1$
and
$f_2(\theta_1(r))y=\sigma_2(\sigma_1(r))c_{1,2}z_1=\sigma_2\sigma_1(r)c_{1,2}z_1$
but $c_{1,2}\sigma_1\sigma_2(r)=\sigma_2\sigma_1(r)c_{1,2}$ since
$(x_2x_1)r=x_2(x_1r)$: indeed,
$(x_2x_1)r=(c_{1,2}x_1x_2)r=c_{1,2}x_1\sigma_2(r)x_2=c_{1,2}\sigma_1\sigma_2(r)x_1x_2$
and
$x_2(x_1r)=x_2\sigma_1(r)x_1=\sigma_2\sigma_1(r)x_2x_1=\sigma_2\sigma_1(r)c_{1,2}x_1x_2$.

By the universal property of $R[z_1;\theta_1]$, there exists a
ring homomorphism
\begin{center}
$\theta_2:R[z_1;\theta_1]\to R[z_1;\theta_1]$,
$\theta_2(z_1):=y=c_{1,2}z_1$, $\theta_2(r)=:f_2(r)=\sigma_2(r)$,
$r\in R$.
\end{center}
To complete the proof for the case $n=2$ we consider the ring
homomorphism $g_2':R\to A$, $g_2'(r):=r$ and let $y:=x_1\in A$;
note that $yg_2'(r)=g_2'(\theta_1(r))y$. In fact,
$yg_2'(r)=x_1r=\sigma_1(r)x_1$ and
$g_2'(\theta_1(r))y=g_2'(\sigma_1(r))x_1=\sigma_1(r)x_1$. By the
universal property of $R[z_1;\theta_1]$ there exists a ring
homomorphism
\begin{center}
$g_2:R[z_1;\theta_1]\to A$, $g_2(z_1):=y=x_1$,
$g_2(r):=g_2'(r)=r$, $r\in R$, i.e., $g_2(p(z_1))=p(x_1)$.
\end{center}
We consider in $A$ the element $y:=x_2$, note that
$yg_2(p(z_1))=g_2(\theta_2(p(z_1)))y$. In fact, we consider first
$p(z_1)=r\in R$, then $yg_2(r)=x_2r=\sigma_2(r)x_2$ and
$g_2(\theta_2(r))y=g_2(\sigma_2(r))x_2=\sigma_2(r)x_2$. Let now
$p(z_1)=z_1$, so $yg_2(z_1)=x_2x_1=c_{1,2}x_1x_2$ and
$g_2(\theta_2(z_1))y=g_2(c_{1,2}z_1)x_2=c_{1,2}x_1x_2$. Suppose by
induction that $yg_2(z_1^k)=g_2(\theta_2(z_1^k))y$, then
\begin{center}
$yg_2(z_1^{k+1})=yg_2(z_1^k)g_2(z_1)=g_2(\theta_2(z_1^k))yg_2(z_1)=$

$g_2(\theta_2(z_1^k))g_2(\theta_2(z_1))y=g_2(\theta_2(z_1^{k+1}))y$.
\end{center}
Finally,
\begin{center}
$yg_2(rz_1^k)=yg_2(r)g_2(z_1^k)=g_2(\theta_2(r))yg_2(z_1^k)=g_2(\theta_2(r))g_2(\theta_2(z_1^k))y=g_2(\theta_2(rz_1^k))$,
\end{center}
and since $g_2$ is additive, then
$yg_2(p(z_1))=g_2(\theta_2(p(z_1)))$.

By the universal property of $(R[z_1;\theta_1])[z_2;\theta_2]$,
there exists a ring homomorphism
\begin{center}
$\widetilde{g_2}:(R[z_1;\theta_1])[z_2;\theta_2]\to A$,
$\widetilde{g_2}(z_2):=y=x_2$,
$\widetilde{g_2}(p(z_1)):=g_2(p(z_1))=p(x_1)$, i.e.,
$\widetilde{g_2}(p(z_1,z_2))=p(x_1,x_2)$.
\end{center}
Since $A$ is a free $R$-module, then $\widetilde{g_2}$ is an
isomorphism.

Before the final induction step, we comment, without all details,
the case $n=3$. We define the ring homomorphism $f_3':R\to
R[z_1;\theta_1][z_2;\theta_2]$ by $f_3'(r):=\sigma_3(r)$, $r\in
R$, we set $y:=c_{1,3}z_1\in R[z_1;\theta_1][z_2;\theta_2]$ and
note that $yf_3'(r)=f_3'(\theta_1(r))y$; the universal property of
$R[z_1;\theta_1]$ induce the ring homomorphism
\begin{center}
$f_3:R[z_1;\theta_1]\to R[z_1;\theta_1][z_2;\theta_2]$,
$f_3(z_1):=y=c_{1,3}z_1$, $f_3(r):=f_3'(r)=\sigma_3(r)$, $r\in R$.
\end{center}
In order to define $\theta_3$, let $y:=c_{2,3}z_2\in
R[z_1;\theta_1][z_2;\theta_2]$, then note that
$yf_3(p(z_1))=f_3(\theta_2(p(z_1)))$. In fact, as before, we
assume first that $p(z_1)=r$, then
$yf_3(r)=c_{2,3}\sigma_2\sigma_3(r)z_2$ and
$f_3(\theta_2(r))=\sigma_3\sigma_2(r)c_{2,3}z_2$, but
$c_{2,3}\sigma_2\sigma_3(r)=\sigma_3\sigma_2(r)c_{2,3}$ since
$(x_3x_2)r=x_3(x_2r)$. For $p(z_1)=z_1$ we have
$yf_3(z_1)=c_{2,3}\sigma_2(c_{1,3})c_{1,2}z_1z_2$ and
$f_3(\theta_2(z_1))y=\sigma_3(c_{1,2})c_{1,3}\sigma_1(c_{2,3})z_1z_2$,
but
$c_{2,3}\sigma_2(c_{1,3})c_{1,2}=\sigma_3(c_{1,2})c_{1,3}\sigma_1(c_{2,3})$
since $(x_3x_2)x_1=x_3(x_2x_1)$. The rest of the proof is as
before by induction and using that $f_3$ is additive. The
universal property of $(R[z_1;\theta_1])[z_2;\theta_2]$ induces
the ring homomorphism
\begin{center}
$\theta_3:R[z_1;\theta_1][z_2;\theta_2]\to
R[z_1;\theta_1][z_2;\theta_2]$, $\theta_3(z_2):=y=c_{2,3}z_2$,
$\theta_3(p(z_1)):=f_3(p(z_1))$.
\end{center}
Now we will complete the proof for the case $n=3$. We have the
ring homomorphism $g_3'':R\to A$, $g_3''(r):=r$, we set $y:=x_1\in
A$ and note that $yg_3''(r)=g_3''(\theta_1(r))y$, so the universal
property of $R[z_1;\theta_1]$ induces the ring homomorphism
$g_3':R[z_1;\theta_1]\to A$, $g_3'(z_1):=y=x_1$,
$g_3'(r):=g_3''(r)=r$, $r\in R$, i.e., $g_3'(p(z_1))=p(x_1)$. Next
we set $y:=x_2\in A$ and observe that
$yg_3'(p(z_1))=g_3'(\theta_2(p(z_1)))$ (the proof is rutine as
before). Then, the universal property of
$(R[z_1;\theta_1])[z_2;\theta_2]$ induces the ring homomorphism
\begin{center}
$g_3:R[z_1;\theta_1][z_2;\theta_2]\to A$, $g_3(z_2):=y=x_2$,
$g_3(p(z_1)):=g_3'(p(z_1))=p(x_1)$, i.e.,
$g_3(p(z_1,z_2))=p(x_1,x_2)$.
\end{center}
Finally, we set $y:=x_3\in A$ and note that
$yg_3(p(z_1,z_2))=g_3(\theta_3(p(z_1,z_2)))y$ (the proof is as
above using double induction, for $z_1$ and for $z_2$, and using
that $g_3$ is additive). By the universal property of
$(R[z_1;\theta_1][z_2;\theta_2])[z_3;\theta_3]$ we get the ring
homomorphism
\begin{center}
$\widetilde{g_3}:(R[z_1;\theta_1][z_2;\theta_2])[z_3;\theta_3]\to
A$, $\widetilde{g_3}(z_2):=y=x_3$,
$\widetilde{g_3}(p(z_1,z_2)):=g_3(p(z_1,z_2))=p(x_1,x_2)$, i.e.,
$\widetilde{g_3}(p(z_1,z_2,z_3))=p(x_1,x_2,x_3)$.
\end{center}
Since $A$ is a free $R$-module, then $\widetilde{g_3}$ is an
isomorphism.

We will conclude the proof of the part (i) of the lemma. By
induction we can construct the ring homomorphism
\begin{center}
$\theta_n:R[z_1;\theta_1]\cdots [z_{n-1};\theta_{n-1}]\to
R[z_1;\theta_1]\cdots [z_{n-1};\theta_{n-1}]$,
$\theta_n(z_{n-1}):=c_{n-1,n}z_{n-1}$,
$\theta_n(p(z_1,\dots,z_{n-2})):=f_n(p(z_1,\dots,z_{n-2}))$,
\end{center}
where
\begin{center}
$f_n:R[z_1;\theta_1]\cdots [z_{n-1};\theta_{n-2}]\to
R[z_1;\theta_1]\cdots [z_{n-1};\theta_{n-1}]$,
$f_n(z_j):=c_{j,n}z_j$, $1\leq j\leq n-2$, $f_n(r):=\sigma_n(r)$,
\end{center}
for $r\in R$, is also defined by induction. With this we will
construct $\widetilde{g_n}$. By induction we will assume that
there exists a ring homomorphism
\begin{center}
$g_n':R[z_1;\theta_1]\cdots [z_{n-2};\theta_{n-2}]\to A$,
$g_n'(z_j):=x_j$, $1\leq j\leq n-2$, $g_n'(r):=r$, i.e.,
$g_n'(p(z_1,\dots,z_{n-2}))=p(x_1,\dots,x_{n-2})$.
\end{center}
We set $y:=x_{n-1}\in A$ and note that
\begin{center}
$yg_n'(p(z_1,\dots,z_{n-2}))=g_n'(\theta_{n-1}(p(z_1,\dots,z_{n-2})))y$.
\end{center}
In fact, $yg_n'(r)=x_{n-1}r=\sigma_{n-1}(r)x_{n-1}$ and
\[
g_n'(\theta_{n-1}(r))y=g_n'(f_{n-1}(r))x_{n-1}=g_n'(\sigma_{n-1}(r))x_{n-1}=\sigma_{n-1}(r)x_{n-1}.
\]
For each $j$ we have, $yg_n'(z_j)=x_{n-1}x_j=c_{j,n-1}x_jx_{n-1}$
and
$g_n'(\theta_{n-1}(z_j))y=g_n'(f_{n-1}(z_j))x_{n-1}=g_n'(c_{j,n-1}z_j)x_{n-1}=g_n'(c_{j,n-1})g_n'(z_j)x_{n-1}=c_{j,n-1}x_jx_{n-1}$.
By induction, and in a similar way as we saw above, we get that
$yg_n'(rz_1^{k_1}\cdots
z_{n-2}^{k_{n-2}})=g_n'(\theta_{n-1}(rz_1^{k_1}\cdots
z_{n-2}^{k_{n-2}}))y$, and since $g_n'$ is additive we conclude
that
$yg_n'(p(z_1,\dots,z_{n-2}))=g_n'(\theta_{n-1}(p(z_1,\dots,z_{n-2})))y$.
By the universal property of the iterated ring
$(R[z_1;\theta_1]\cdots
[z_{n-2};\theta_{n-2}])[z_{n-1};\theta_{n-1}]$ there exists a ring
homomorphism
\begin{center}
$g_n:(R[z_1;\theta_1]\cdots
[z_{n-2};\theta_{n-2}])[z_{n-1};\theta_{n-1}]\to A$,
$g_n(z_{n-1}):=x_{n-1}$,
$g_n(p(z_1,\dots,z_{n-2})):=g_n'(p(z_1,\dots,z_{n-2}))$, i.e.,
$g_n(p(z_1,\dots,z_{n-1}))=p(x_1,\dots,x_{n-1})$.
\end{center}
Finally, we set $y:=x_n\in A$ and we can prove as before that
\begin{center}
$yg_n(p(z_1,\dots,z_{n-1}))=g_n(\theta_n(p(z_1,\dots,z_{n-1})))y$.
\end{center}
The universal property of $(R[z_1;\theta_1]\cdots
[z_{n-1};\theta_{n-1}])[z_n;\theta_n]$ induces a ring homomorphism
$\widetilde{g_n}$ defined by
\begin{center}
$\widetilde{g_n}(z_n):=x_n$,
$\widetilde{g_n}(p(z_1,\dots,z_{n-1})):=g_n(p(z_1,\dots,z_{n-1}))=p(x_1,\dots,x_{n-1})$,
i.e., $\widetilde{g_n}(p(z_1,\dots,z_{n}))=p(x_1,\dots,x_{n})$.
\end{center}
Since $A$ is a free $R$-module, then $\widetilde{g_n}$ is an
isomorphism.

(ii) We will prove first that each $\theta_j$ is surjective: if
$r\in R$ then $r=\theta_j(\sigma_j^{-1}(r))$; let $1\leq i\leq
j-1$ and let $c:=\sigma_j^{-1}(c_{i,j}^{-1})$, then
$\theta_j(\sigma_j^{-1}(c_{i,j}^{-1})z_i)=\sigma_j(\sigma_j^{-1}(c_{i,j}^{-1}))c_{i,j}z_i=c_{i,j}^{-1}c_{i,j}z_i=z_i$.

Let $p(z_1,\dots,z_{j-1})\in R[z_1;\theta_1]\cdots
[z_{j-1};\theta_{j-1}]$ such that
$\theta_j(p(z_1,\dots,z_{j-1}))=0$. Then $p(z_1,\dots,z_{j-1})$ is a
finite summa of terms as $rz_1^{\alpha_1}\cdots
z_{j-1}^{\alpha_{j-1}}$. If we prove that each coefficient $r$ es
zero, then $p(z_1,\dots,z_{j-1})=0$ and $\theta_j$ is injective.
For this we will show that $\theta_j(rz_1^{\alpha_1}\cdots
z_{j-1}^{\alpha_{j-1}})=\sigma_j(r)uz_1^{\alpha_1}\cdots
z_{j-1}^{\alpha_{j-1}}$, where $u\in R^*$, and from this we get
$\sigma_j(r)u=0$, i.e., $\sigma_j(r)=0$, and hence, $r=0$.

Thus, $\theta_j(z_i)=c_{i,j}z_i$ and $u=c_{i,j}\in R^*$;
$\theta_j(z_i^{\alpha+1})=\theta_j(z_i^{\alpha}z_i)=\theta_j(z_i^{\alpha})\theta_j(z_i)=u'z_i^{\alpha}c_{i,j}z_i$,
where $u'$ is invertible and constructed by induction for the case
$\theta_j(z_i^{\alpha})$; hence
$\theta_j(z_i^{\alpha+1})=u'\sigma_i^{\alpha}(c_{i,j})z_i^{\alpha+1}=uz_i^{\alpha+1}$,
where $u:=u'\sigma_i^{\alpha}(c_{i,j})\in R^*$. Finally, by
induction on the number of factors, there exists $u'\in R^*$ such
that
\begin{center}
$\theta_j(rz_1^{\alpha_1}\cdots
z_{j-1}^{\alpha_{j-1}})=\sigma_j(r)\theta_j(z_1^{\alpha_1}\cdots
z_{j-2}^{\alpha_{j-2}})\theta_j(z_{j-1}^{\alpha_{j-1}})=\sigma_j(r)u'z_1^{\alpha_1}\cdots
z_{j-2}^{\alpha_{j-2}}u''z_{j-1}^{\alpha_{j-1}}=\sigma_j(r)u'\sigma_1^{\alpha_1}\cdots
\sigma_{j-2}^{\alpha_{j-2}}(u'')z_1^{\alpha_1}\cdots
z_{j-1}^{\alpha_{j-1}}$, with $u''\in R^*$.
\end{center}
Then, $u:=u'\sigma_1^{\alpha_1}\cdots
\sigma_{j-2}^{\alpha_{j-2}}(u'')\in R^*$. This complete the proof.
\end{proof}
\end{theorem}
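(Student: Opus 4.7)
The approach for (i) is to build both the iterated skew polynomial ring $B:=R[z_1;\theta_1]\cdots[z_n;\theta_n]$ and an isomorphism $B\to A$ by repeatedly invoking the universal property of skew polynomial rings: given a ring $S$ with endomorphism $\theta$, a ring map $g\colon S\to T$, and $y\in T$ with $y\,g(s)=g(\theta(s))\,y$ for every $s\in S$, there is a unique extension $\widetilde{g}\colon S[z;\theta]\to T$ with $\widetilde{g}(z)=y$. I would run two parallel inductions on $j$.

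Construction of the $\theta_j$: set $\theta_1:=\sigma_1$. Supposing $S_{j-1}:=R[z_1;\theta_1]\cdots[z_{j-1};\theta_{j-1}]$ has been built, I want $\theta_j\colon S_{j-1}\to S_{j-1}$ with $\theta_j(r)=\sigma_j(r)$ and $\theta_j(z_i)=c_{i,j}z_i$ for $i<j$. I would construct $\theta_j$ by starting from $r\mapsto\sigma_j(r)\colon R\to S_{j-1}$ and adjoining $z_1,\dots,z_{j-1}$ one at a time via the universal property, choosing $y:=c_{i,j}z_i$ at step $i$. The compatibility condition at each step reduces, after using $z_i s=\sigma_i(s)z_i$ in the earlier iterated ring, to an identity such as $c_{i,j}\sigma_i\sigma_j(r)=\sigma_j\sigma_i(r)c_{i,j}$, which follows from the associativity $(x_jx_i)r=x_j(x_ir)$ in $A$ combined with quasi-commutativity (Definition \ref{sigmapbwderivationtype}). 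The interactions among the $z_i$'s translate analogously into $(x_jx_i)x_k=x_j(x_ix_k)$.

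Construction of the isomorphism $\widetilde{g}_n\colon B\to A$: begin with $R\hookrightarrow A$ and apply the universal property with $y:=x_1$ to obtain $g_1\colon R[z_1;\theta_1]\to A$, using $x_1r=\sigma_1(r)x_1$. Inductively, given $g_j\colon S_{j}\to A$ sending $z_i\mapsto x_i$, take $y:=x_{j+1}$; the required identity $y\,g_j(p)=g_j(\theta_{j+1}(p))\,y$ reduces on generators to $x_{j+1}r=\sigma_{j+1}(r)x_{j+1}$ and $x_{j+1}x_i=c_{i,j+1}x_ix_{j+1}$, both of which hold in $A$. The universal property delivers $g_{j+1}$, and after $n$ steps I get $\widetilde{g}_n\colon B\to A$ sending $z^\alpha\mapsto x^\alpha$. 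Since $\{z^\alpha\}$ is the standard left $R$-basis of $B$ and ${\rm Mon}(A)$ is an $R$-basis of $A$ by Definition \ref{gpbwextension}, $\widetilde{g}_n$ is an $R$-linear bijection, hence a ring isomorphism.

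For (ii), fix $j$ and assume $\sigma_j$ bijective and every $c_{i,j}$ invertible. Surjectivity of $\theta_j$ is immediate from $r=\theta_j(\sigma_j^{-1}(r))$ and $z_i=\theta_j(\sigma_j^{-1}(c_{i,j}^{-1})\,z_i)$ for $i<j$. For injectivity I would show by induction on multidegree that $\theta_j(rz_1^{\alpha_1}\cdots z_{j-1}^{\alpha_{j-1}})=\sigma_j(r)\,u\,z_1^{\alpha_1}\cdots z_{j-1}^{\alpha_{j-1}}$ for some $u\in R^\ast$ built from the $c_{i,j}$'s and their images under the $\sigma_i$'s (each $\sigma_i$ being automatically bijective). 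Any element of the kernel, written uniquely in the standard basis, then forces each coefficient to satisfy $\sigma_j(r)u=0$, hence $r=0$. The principal obstacle I foresee is the verification of the commutation relation $y\,g(s)=g(\theta(s))\,y$ at each application of the universal property: although each verification reduces to an associativity identity in $A$, the bookkeeping with nested $\sigma$'s and structure constants $c_{i,j}$ is the main source of length in the argument.
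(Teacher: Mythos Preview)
Your proposal is correct and follows essentially the same route as the paper: both construct the $\theta_j$ and the isomorphism $B\to A$ by repeated use of the universal property of skew polynomial rings, reducing each compatibility check to an associativity identity in $A$, and then conclude bijectivity from the matching of the two $R$-bases; part (ii) is handled identically via explicit preimages for surjectivity and the formula $\theta_j(rz^{\alpha})=\sigma_j(r)\,u\,z^{\alpha}$ with $u\in R^{*}$ for injectivity. The paper simply spells out the cases $n=1,2,3$ in full before the general induction, whereas you state the induction scheme directly.
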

From the above theorems we can get some interesting consequences.
\begin{corollary}[Hilbert Basis Theorem]\label{1.3.4}
Let $A$ be a bijective skew $PBW$ extension of $R$. If $R$ is a
left Noetherian ring then $A$ is also a left Noetherian ring.
\end{corollary}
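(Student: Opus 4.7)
The plan is to transfer Noetherianness from $R$ to $A$ through the associated graded ring, exploiting the two key theorems already established. First I would invoke Theorem \ref{1.3.2}: since $A$ is bijective, the filtration $\{F_m\}$ yields an associated graded ring $Gr(A)$ which is a quasi-commutative bijective skew $PBW$ extension of $R$. Then by Theorem \ref{1.3.3}, $Gr(A)$ is isomorphic to an iterated skew polynomial ring
\[
R[z_1;\theta_1][z_2;\theta_2]\cdots[z_n;\theta_n]
\]
of endomorphism type in which every $\theta_j$ is bijective, hence an automorphism.

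Next I would apply the classical Hilbert Basis Theorem for skew polynomial rings of automorphism type (see \cite{McConnell}): if $S$ is left Noetherian and $\theta$ is an automorphism of $S$, then $S[z;\theta]$ is left Noetherian. Starting from the left Noetherian ring $R$ and iterating this $n$ times, one concludes that $Gr(A)$ is left Noetherian.

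Finally, the standard filtered-to-filtered principle comes into play: if the associated graded ring of a positively filtered ring with $F_0=R$ exhaustive and separated filtration is left Noetherian, then the filtered ring is left Noetherian. The usual proof attaches to each left ideal $I\subseteq A$ its \emph{symbol ideal} $\mathrm{gr}(I):=\bigoplus_m (I\cap F_m + F_{m-1})/F_{m-1}\subseteq Gr(A)$; any finite set of generators of $\mathrm{gr}(I)$ may be lifted to elements of $I$, and an induction on the degree of the filtration, using the leading term argument via Remark \ref{identities}(v) and Proposition \ref{1.1.10a}, shows that these lifted elements generate $I$. This yields left Noetherianness of $A$.

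The only delicate point is this last lifting step, since the proof relies on having a well-behaved degree function and a cancellation property for leading terms. Fortunately Theorem \ref{coefficientes} and the definition of $\mathrm{lm}, \mathrm{lc}, \mathrm{lt}$ in Remark \ref{identities}(v) provide exactly what is needed: the leading term of a product in $A$ matches (modulo lower-degree terms) the corresponding product in $Gr(A)\cong A^{\sigma}$, so the symbol construction is compatible with multiplication and the filtered Hilbert basis argument goes through verbatim.
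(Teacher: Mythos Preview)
Your proof is correct and follows essentially the same route as the paper: apply Theorem~\ref{1.3.2} to identify $Gr(A)$ as a quasi-commutative bijective skew $PBW$ extension, then Theorem~\ref{1.3.3} to realize it as an iterated skew polynomial ring of automorphism type, conclude $Gr(A)$ is left Noetherian by the classical Hilbert Basis Theorem for such rings, and finally transfer Noetherianity from $Gr(A)$ to $A$ via the filtered--graded principle (the paper simply cites \cite{McConnell} for this last step). Your additional commentary invoking Remark~\ref{identities}(v) and Proposition~\ref{1.1.10a} for the lifting argument is unnecessary: the passage from ``$Gr(A)$ left Noetherian'' to ``$A$ left Noetherian'' is a general fact about positively filtered rings and does not require any special features of skew $PBW$ extensions.
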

\begin{proof}
According to Theorem \ref{1.3.2}, $Gr(A)$ is a quasi-commutative
skew $PBW$ extension, and by the hypothesis, $Gr(A)$ is also
bijective. By Theorem \ref{1.3.3}, $Gr(A)$ is isomorphic to an
iterated skew polynomial ring $R[z_1;\theta_1]\cdots
[z_{n};\theta_n]$ such that each $\theta_i$ is bijective, $1\leq
i\leq n$. This implies that $Gr(A)$ is a left Noetherian ring, and
hence, $A$ is left Noetherian (see \cite{McConnell}).
\end{proof}
\begin{remark}
If $A$ is a $PBW$ extension of a ring $R$, then Corollary \ref{1.3.4} extends the result in
\cite{Li} since $A$ is bijective (see also \cite{Gateva} and \cite{Zhang}). Moreover,
$A^{\sigma}=R[x_1,\dots,x_n]$ is the habitual polynomial ring since in this case $c_{i,r}=r$ and
$c_{i,j}=1$ for any $r\in R$ and every $1\leq i,j\leq n$. Thus, $Gr(A)\cong R[x_1,\dots,x_n]$.
\end{remark}
Now we can consider the regularity of bijective skew $PBW$
extensions
\begin{corollary}\label{3.2.1a}
Let $A$ be a bijective skew $PBW$ extension of a ring $R$. If $R$
is a left regular and left Noetherian ring, then $A$ is left
regular.
\end{corollary}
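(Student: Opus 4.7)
The plan is to cascade the structural results from Theorems \ref{1.3.2} and \ref{1.3.3} together with a classical filtered/graded comparison theorem for global dimension.

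\textbf{Step 1: Reduce to the graded case.} By Theorem \ref{1.3.2}, $A$ carries the positive filtration (\ref{eq1.3.1a}) whose associated graded ring $Gr(A)$ is a quasi-commutative bijective skew $PBW$ extension of $R$. Since $F_0=R$ is left Noetherian and $Gr(A)$ will turn out to be left Noetherian (by the arguments in Corollary \ref{1.3.4}), I intend to invoke the standard fact (see McConnell-Robson, Ch.~7) that for a positively filtered ring with Noetherian associated graded, regularity of $Gr(A)$ lifts to regularity of $A$: every finitely generated left $A$-module $M$ admits a good filtration whose $Gr(M)$ is a finitely generated $Gr(A)$-module, and from a finite projective resolution of $Gr(M)$ one obtains a finite projective resolution of $M$. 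Thus it suffices to prove that $Gr(A)$ is left regular and left Noetherian.

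\textbf{Step 2: Reduce $Gr(A)$ to an iterated skew polynomial ring.} By Theorem \ref{1.3.3}, $Gr(A)\cong R[z_1;\theta_1][z_2;\theta_2]\cdots [z_n;\theta_n]$ where each $\theta_i$ is an automorphism (by part (ii) of that theorem, applicable because $A$, and hence $Gr(A)$, is bijective). So now I only need the classical result that iterated skew polynomial extensions of automorphism type preserve the property of being left Noetherian and left regular.

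\textbf{Step 3: Regularity under a single skew polynomial extension.} For the inductive step, let $S$ be left Noetherian and left regular, and let $\sigma\in \mathrm{Aut}(S)$. Then $S[z;\sigma]$ is left Noetherian (Hilbert basis theorem for skew polynomials), and there is a standard change-of-rings argument showing it is left regular: every finitely generated left $S[z;\sigma]$-module $M$ sits in a short exact sequence
\[
0\longrightarrow S[z;\sigma]\otimes_S M\longrightarrow S[z;\sigma]\otimes_S M\longrightarrow M\longrightarrow 0,
\]
where the left map is multiplication by $z$ twisted by $\sigma$, and since $S$ is left regular the outer terms have finite projective dimension over $S[z;\sigma]$; hence so does $M$. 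Iterating this over $i=1,\dots,n$ yields that $Gr(A)$ is left regular and left Noetherian.

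\textbf{Step 4: Conclude.} Combining Steps 1--3 gives that $A$ is left regular, finishing the proof. The main obstacle I anticipate is justifying Step 1 cleanly: one needs the filtration (\ref{eq1.3.1a}) to be well-behaved enough (in particular $F_0=R$ Noetherian, each $F_m$ a finitely generated $R$-bimodule, and $Gr(A)$ Noetherian) to guarantee the existence of good filtrations on finitely generated $A$-modules and the descent of a finite projective resolution from $Gr(M)$ to $M$. All of these are immediate from the shape of (\ref{eq1.3.1a}) and the fact that $Gr(A)$ is an iterated skew polynomial ring, so the bookkeeping, rather than any conceptual difficulty, is the only obstacle.
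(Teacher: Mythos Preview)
Your proposal is correct and follows essentially the same route as the paper: both reduce to $Gr(A)$ via Theorem~\ref{1.3.2}, identify $Gr(A)$ with an iterated skew polynomial ring of automorphism type via Theorem~\ref{1.3.3}, and then invoke the standard McConnell--Robson results (7.7.4 and 7.7.5) that regularity passes up through such extensions and lifts from the associated graded to the filtered ring. The only difference is that the paper simply cites those two results, whereas you sketch their proofs in Steps~1 and~3; your sketch of the change-of-rings sequence in Step~3 is the right idea, though you should be careful to record the $\sigma$-twist on one of the tensor factors when writing it down precisely.
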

\begin{proof}
Theorems \ref{1.3.2} and \ref{1.3.3} say that $Gr(A)$ is
isomorphic to a iterated skew polynomial ring of automorphism type
with coefficients in $R$, then the result follows from
\cite{McConnell}, Theorem 7.7.5. and Proposition 7.7.4.
\end{proof}
Another interesting consequence of the main theorems is Serre's
Theorem for skew $PBW$ extensions.
\begin{definition}
A ring $B$ is $PSF$ if each finitely generated projective module
is stably free.
\end{definition}
\begin{corollary}[Serre's theorem]\label{3.6}
Let $A$ be a bijective skew $PBW$ extension of a ring $R$ such
that $R$ is left Noetherian, left regular and $PSF$. Then $A$ is
$PSF$.
\end{corollary}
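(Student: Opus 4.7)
The plan is to reduce via the associated graded ring to an iterated skew polynomial ring over $R$, invoke the classical $K$-theoretic results for such rings, and then transfer back to $A$ through the filtration. As preparation, observe that by Corollaries \ref{1.3.4} and \ref{3.2.1a}, $A$ is already left Noetherian and left regular, so in particular $K_0(A) = G_0(A)$, the Grothendieck group of finitely generated modules. The goal is then equivalent to showing that $K_0(A)$ is generated as an abelian group by the class $[A]$.

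By Theorem \ref{1.3.2}, $Gr(A)$ is a quasi-commutative bijective skew $PBW$ extension of $R$, and by Theorem \ref{1.3.3} it is isomorphic to an iterated skew polynomial ring $R[z_1;\theta_1]\cdots[z_n;\theta_n]$ in which each $\theta_i$ is an automorphism. Iterating the Fundamental Theorem of $K_0$ for skew polynomial extensions of automorphism type over a left Noetherian left regular ring (Quillen's theorem; see \cite{McConnell}, Chapter 12), one obtains $K_0(R) \cong K_0(Gr(A))$. Since $R$ is PSF by hypothesis, every class in $K_0(R)$ is an integer multiple of $[R]$, and pulling back along this isomorphism forces $Gr(A)$ to be PSF as well.

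Next, transfer this property back to $A$. The filtration defined in (\ref{eq1.3.1a}) is positive, exhaustive and separated, with $F_0 = R$, and each $F_m$ is a finitely generated free left $R$-module; moreover $Gr(A)$ is left Noetherian and left regular by the previous step. Under these hypotheses, Quillen's filtered-to-graded comparison theorem (again \cite{McConnell}, Chapter 12) yields $K_0(A) \cong K_0(Gr(A))$; chasing the isomorphism, one concludes that every finitely generated projective $A$-module is stably free, which is precisely the PSF condition for $A$.

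The main obstacle is the careful application of the two $K$-theoretic isomorphisms: the \emph{horizontal} Quillen theorem for Ore extensions of automorphism type, which depends essentially on each $\theta_i$ being bijective (guaranteed by Theorem \ref{1.3.3}(ii) together with the bijectivity hypothesis on $A$), and the \emph{vertical} filtered-to-graded comparison, which requires the filtration to be good and $Gr(A)$ to be left Noetherian and left regular. Both conditions are met in the present setting thanks to the preparatory corollaries and the key structural theorems of Section 2, so composing the two isomorphisms produces the PSF conclusion for $A$.
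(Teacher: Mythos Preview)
Your argument is correct and shares the paper's core idea: pass to $Gr(A)$, verify it is left Noetherian and left regular, and invoke the $K_0$-machinery of \cite{McConnell}, Chapter~12. The paper does this in a single stroke by citing \cite{McConnell}, Theorem~12.3.2, whose hypotheses are precisely that $A$ be filtered with $F_0=R$ and that $Gr(A)$ be left Noetherian, left regular, and flat as a right $R$-module (the last point supplied by Proposition~\ref{1.1.10a} applied to the bijective skew $PBW$ extension $Gr(A)$); that theorem then transfers PSF from $R$ to $A$ directly. Your two-step decomposition --- first $K_0(R)\cong K_0(Gr(A))$ via the iterated Ore extension, then a filtered comparison back to $A$ --- is essentially what underlies the proof of 12.3.2, so the approaches coincide in substance. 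One small redundancy: the Quillen filtered theorem you invoke in the second step actually gives $K_0(A)\cong K_0(F_0)=K_0(R)$ in one move (this is the form quoted later in Section~5 of the paper), so once $Gr(A)$ is known to be Noetherian and regular there is no need to first establish that $Gr(A)$ itself is PSF.
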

\begin{proof}
By Theorem \ref{1.3.2}, $A$ is filtered, $A_0=R$, and $Gr(A)$ is a
quasi-commutative bijective skew $PBW$ extension of $R$; Corollary
\ref{1.3.4} says that $Gr(A)$ is left Noetherian, and Corollary
\ref{3.2.1a} implies that $Gr(A)$ is left regular. Moreover,
$Gr(A)$ is flat as right $R$-module (see Proposition
\ref{1.1.10a}), then assuming that $R$ is $PSF$ we get from
\cite{McConnell}, Theorem 12.3.2 that $A$ is $PSF$.
\end{proof}
\section{Examples of bijective skew $PBW$ extensions}\label{Examplestrad}
In this section we present and classify many remarkable examples of bijective skew $PBW$
extensions. From the results of the previous sections we can conclude that all rings presented next
are left Noetherian, left regular and $PSF$. In every example below we will highlight in bold the
ring of coefficients. However, it is important to remark that there are different ways to choose
this ring as well as the subclass to which each example belongs; $\Bbbk$ will represent a field.
\subsection{$PBW$ extensions}\label{gpbwexample}
Any \textit{PBW} extension (cf. \cite{Bell}) is a bijective skew
$PBW$ extension since in this case $\sigma_{i}=i_{R}$ for each
$1\leq i\leq n$, and $c_{i,j}=1$ for every $1\leq i,j\leq n$.
Thus, for $PBW$ extensions we have $A=i(R)\langle
x_1,\dots,x_n\rangle$. Examples of \textit{PBW} extensions are the
following (see \cite{LezamaGallego}):
\begin{enumerate}
\item[\rm (a)]The \textit{habitual polynomial ring} $A=\textbf{\emph{R}}[t_1,\dots, t_n]$.
\item[\rm (b)]Any \textit{skew polynomial ring of derivation type}
$A=\textbf{\emph{R}}[x;\sigma, \delta]$, i.e., with $\sigma=i_R$.
In general, any \textit{Ore extension of derivation type}
$\textbf{\emph{R}}[x_1;\sigma_1,\delta_1]\cdots
[x_n;\sigma_n,\delta_n]$, i.e., such that $\sigma_i=i_R$, for any
$1\leq i\leq n$. In particular, any \textit{Ore algebra of
derivation type}, i.e., when
$\boldsymbol{R:=\Bbbk[t_1,\dots,t_m]}, \ m\geq 0$.
\item[\rm (c)]The \textit{Weyl algebra} $A_n(\Bbbk):=\boldsymbol{\Bbbk[t_1,\dots, t_n]}[x_1;
\partial/\partial t_1]\cdots [x_n;
\partial/\partial t_n]$. The \textit{extended Weyl algebra}
$B_n(\Bbbk):=\boldsymbol{\Bbbk(t_1,\dots,t_n)}[x_1;
\partial/\partial t_1]\cdots [x_n;
\partial/\partial t_n]$, where $\Bbbk(t_1,\dots,t_n)$ is the field of fractions of
$\Bbbk[t_1,\dots,t_n]$, is also a $PBW$ extension. These algebras are also known as
\textit{algebras of linear partial differential operators}, see Subsection \ref{operatoralgebras}
below. The Weyl algebra $A_n(\Bbbk)$ can be generalized assuming that $R$ is an arbitrary ring,
i.e., we have the \textit{Weyl ring}
\begin{equation*}
A_n(R):=\boldsymbol{R[t_1,\dots,t_n]}[x_1;\partial/\partial
t_1]\cdots [x_n;\partial/\partial t_n].
\end{equation*}
\item[\rm (d)]Let $k$ be a
commutative ring and $\mathcal{G}$ a finite dimensional Lie
algebra over $k$ with basis $\{x_1,\dots ,x_n\}$; the
\textit{universal enveloping algebra} of $\mathcal{G}$,
$\cU(\mathcal{G})$, is a $PBW$ extension of $\boldsymbol{k}$ (see
\cite{Li}), \cite{McConnell} and \cite{Zhang}). In this case,
$x_ir-rx_i=0$ and $x_ix_j-x_jx_i=[x_i,x_j]\in
\mathcal{G}=k+kx_1+\cdots+kx_n$, for any $r\in k$ and $1\leq
i,j\leq n$.
\item[\rm (e)]Let $k$, $\mathcal{G}$, $\{x_1,\dots ,x_n\}$ and $\cU(\mathcal{G})$ be as in the
previous example; let $R$ be a $k$-algebra containing $k$. The
\textit{tensor product} $A:=R\ \otimes_k\ \cU(\mathcal{G})$ is a
$PBW$ extension of $\boldsymbol{R}$, and it is a particular case
of a more general construction, the \textit{crossed product}
$R*\cU(\mathcal{G})$ of $R$ by $\cU(\mathcal{G})$, that is also a
$PBW$ extension of $\boldsymbol{R}$ (cf. \cite{McConnell}).
\end{enumerate}
In the following subsections we consider a lot of skew $PBW$
extensions that are not \textit{PBW} extensions.
\subsection{Ore extensions of bijective type}\label{Oreextensionsofinjectivetype}
Any \textit{skew polynomial ring $\boldsymbol{R}[x;\sigma ,\delta]$ of bijective type}, i.e., with
$\sigma$ bijective, is a bijective skew $PBW$ extension (see also \cite{LezamaGallego}). In this
case we have $\boldsymbol{R}[x;\sigma,\delta]\cong\sigma(R)\langle x\rangle$. If additionally
$\delta=0$, then $R[x;\sigma]$ is quasi-commutative. More generally, let $R[x_1;\sigma_1
,\delta_1]\cdots [x_n;\sigma_n ,\delta_n]$ be an \textit{iterated skew polynomial ring of bijective
type}, i.e., the following conditions hold:
\begin{itemize}
\item for $1\leq i\leq n$, $\sigma_i$ is bijective;
\item for every $r\in R$ and $1\leq i\leq n$,
$\sigma_i(r),\delta_i(r)\in R$;
\item for $i<j$, $\sigma_j(x_i)=cx_i+d$, with $c,d\in R$ and $c$ has a
left inverse;
\item for $i<j$, $\delta_j(x_i)\in R+Rx_1+\cdots +Rx_n$,
\end{itemize}
then, $\boldsymbol{R}[x_1;\sigma_1 ,\delta_1]\cdots [x_n;\sigma_n
,\delta_n]$ is a bijective skew $PBW$ extension. Under these
conditions we have
\begin{center}
$R[x_1;\sigma_1 ,\delta_1]\cdots [x_n;\sigma_n,\delta_n]\cong \sigma(R)\langle x_1,\dots,x_n\rangle$.
\end{center}
In particular, any \textit{Ore extension $R[x_1;\sigma_1 ,\delta_1]\cdots [x_n;\sigma_n ,\delta_n]$
of bijective type}, i.e., for $1\leq i\leq n$, $\sigma_i$ is bijective, is an skew bijective $PBW$
extension. In fact, in Ore extensions for every $r\in R$ and $1\leq i\leq n$,
$\sigma_i(r),\delta_i(r)\in R$, and for $i<j$, $\sigma_j(x_i)=x_i$ and $\delta_j(x_i)=0$. An
important subclass of Ore extension of bijective type are the \textit{Ore algebras of bijective
type}, i.e., when $\boldsymbol{R=\Bbbk[t_1,\dots,t_m]}$, $m\geq 0$. Thus, we have
\begin{center}
$\Bbbk[t_1,\dots,t_m][x_1;\sigma_1 ,\delta_1]\cdots [x_n;\sigma_n
,\delta_n]\cong \sigma(\Bbbk[t_1,\dots,t_m])\langle x_1,\dots,x_n\rangle$.
\end{center}
Some concrete examples of Ore algebras of bijective type are the
following.
\begin{enumerate}
\item [\rm (a)]\label{D_qsigma} \textit{The algebra of $q$-differential operators $D_{q,h}[x,y]$}: let
$q,h\in \Bbbk, q\neq 0$; consider
$\boldsymbol{\Bbbk[y]}[x;\sigma,\delta]$, $\sigma(y):=qy$ and
$\delta(y):=h$. By definition of skew polynomial ring we have
$xy=\sigma(y)x+\delta(y)=qyx+h$, and hence $xy-qyx=h$. Therefore,
$D_{q,h}[x,y]\cong \sigma(\boldsymbol{\Bbbk[y]})\langle x\rangle$.
\item [\rm (b)]\label{shift} \textit{The algebra of shift operators $S_h$}: let
$h\in \Bbbk$. The algebra of shift operators is defined by
$S_h:=\boldsymbol{\Bbbk[t]}[x_h;\sigma_h,\delta_h]$, where
$\sigma_h(p(t)):=p(t-h)$, and $\delta_h:=0$. Thus, $S_h\cong
\sigma(\boldsymbol{\Bbbk[t]})\langle x_h\rangle$.
\item [\rm (c)]\label{mixed} The \textit{mixed algebra $D_h$}: let
$h\in \Bbbk$, the algebra $D_h$ is defined by
$D_h:=\boldsymbol{\Bbbk[t]}[x;i_{\Bbbk[t]},\frac{d}{dt}][x_h;\sigma_h,\delta_h]$, where
$\sigma_h(x):=x$. Then $D_h\cong \sigma(\boldsymbol{\Bbbk[t]})\langle x,x_h\rangle$.
\item [\rm (d)]\label{multidime} The \textit{algebra for multidimensional discrete linear systems}
is defined by
\[D:=\boldsymbol{\Bbbk[t_1,\dots,t_n]}[x_1;\sigma_1]\cdots[x_n;\sigma_n],
\]
where
\begin{center}
$\sigma_i(p(t_1,\dots,t_n)):=p(t_1,\dots,t_{i-1},t_{i}+1,t_{i+1},\dots,t_n),
\ \sigma_i(x_i)=x_i$, $1\leq i\leq n$.
\end{center}
Thus, $D$ is a quasi-commutative bijective skew $PBW$ extension of
$\boldsymbol{\Bbbk[t_1,\dots,t_n]}$.
\end{enumerate}
\subsection{Operator algebras}\label{operatoralgebras}
In this subsection we recall some important and well-known  operator algebras (cf. \cite{Chyzak1},
\cite{Wilf}).
\begin{enumerate}
\item [\rm (a)]\textit{Algebra of linear partial differential operators.}
The $n$th Weyl algebra $A_n(\Bbbk)$ over $\Bbbk$ coincides with
the $\Bbbk$-algebra of linear partial differential operators with
polynomial coefficients $\boldsymbol{\Bbbk[t_1,\dots,t_n]}$. As we
have seen, the generators of $A_n(\Bbbk)$ satisfy the following
relations
\begin{gather}
t_it_j=t_jt_i,\ \ \ \partial_i\partial_j=\partial_j\partial_i,\ \ \ \ 1\le i<j\le n,\\
\partial_jt_i=t_i\partial_j+\delta_{ij},\ \ \ \ \ \ \ \ \ \ \ \ 1\le i,j\le
n,
\end{gather}
where $\delta_{ij}$ is the Kronecker symbol. Similarly, let $\boldsymbol{\Bbbk(t_1,\dotsc,t_n)}$ be
the field of rational functions in $n$ variables. Then the $\Bbbk$-algebra of linear partial
differential operators with rational function coefficients is the algebra
$B_n(\Bbbk)=\boldsymbol{\Bbbk(t_1,\dotsc,t_n)}[\partial_1,\dotsc,\partial_n]$ where the generators
satisfy the same relations above.
\item [\rm (b)]\textit{Algebra of linear partial shift operators.} The $\Bbbk$-algebra of linear partial \textit{shift}
(\textit{recurrence}) operators with polynomial coefficients,
respectively with rational coefficients, is
$\boldsymbol{\Bbbk[t_1,\dotsc,t_n]}[E_1,\dotsc,E_m]$, respectively
$\boldsymbol{\Bbbk(t_1,\dotsc,t_n)}[E_1,\dotsc,E_m]$, $n\ge m$,
subject to the relations:
    \begin{align*}
    t_jt_i&=t_it_j,\ \ & 1\le i<j\le n,\\
    E_it_i&=(t_i+1)E_i=t_iE_i+E_i, & 1\le i\le m,\\
    E_jt_i&=t_iE_j,\ \ & i\neq j,\\
    E_jE_i&=E_iE_j,\ \ & 1\le i<j\le m.
    \end{align*}
\item [\rm (c)]\textit{Algebra of linear partial difference operators.} The $\Bbbk$-algebra of
linear partial \textit{difference opertors} with polynomial
coefficients, respectively with rational coefficients, is
$\boldsymbol{\Bbbk[t_1,\dotsc,t_n]}[\Delta_1,\dotsc,\Delta_m]$,
respectively
$\boldsymbol{\Bbbk(t_1,\dotsc,t_n)}[\Delta_1,\dotsc,\Delta_m]$,
$n\ge m$, subject to the relations:
    \begin{align*}
     t_jt_i&=t_it_j,\ \ \ & 1\le i<j\le n,\\
    \Delta_it_i&=(t_i+1)\Delta_i+1=t_i\Delta_i+\Delta_i+1,\ \ & 1\le i\le m,\\
    \Delta_jt_i&=t_i\Delta_j,\ \ & i\neq j,\\
    \Delta_j\Delta_i&=\Delta_i\Delta_j,\ \ & 1\le i<j\le m.
    \end{align*}
\item [\rm (d)]\textit{Algebra of linear partial $q$-dilation operators.} For a fixed $q\in \Bbbk-\{0\}$, the
$\Bbbk$-algebra of linear partial $q$-\textit{dilation operators}
with polynomial coefficients, respectively, with rational
coefficients, is
$\boldsymbol{\Bbbk[t_1,\dotsc,t_n]}[H_1^{(q)},\dotsc,H_m^{(q)}]$,
respectively
$\boldsymbol{\Bbbk(t_1,\dotsc,t_n)}[H_1^{(q)},\dotsc,H_m^{(q)}]$,
$n\ge m$, subject to the relations:
        \begin{align*}
        t_jt_i&=t_it_j,\ \ \ & 1\le i<j\le n,\\
        H_i^{(q)}t_i&=qt_iH_i^{(q)},\ \ & 1\le i\le m,\\
        H_j^{(q)}t_i&=t_iH_j^{(q)},\ & i\neq j,\\
        H_j^{(q)}H_i^{(q)}&=H_i^{(q)}H_j^{(q)},\ \ & 1\le i<j\le m.
        \end{align*}
\item [\rm (e)]\textit{Algebra of linear partial $q$-differential operators.} For a fixed $q\in \Bbbk-\{0\}$, the
$\Bbbk$-algebra of linear partial $q$-\textit{differential
operators} with polynomial coefficients, respectively with
rational coefficients is
$\boldsymbol{\Bbbk[t_1,\dotsc,t_n]}[D_1^{(q)},\dotsc,D_m^{(q)}]$,
respectively the ring
$\boldsymbol{\Bbbk(t_1,\dotsc,t_n)}[D_1^{(q)},\dotsc,D_m^{(q)}]$,
$n\ge m$, subject to the relations:
    \begin{align*}
    t_jt_i&=t_it_j,\ \ &1\le i<j\le n,\\
    D_i^{(q)}t_i&=qt_iD_i^{(q)}+1,\ \ &1\le i\le m,\\
    D_j^{(q)}t_i&=t_iD_j^{(q)},\ \ &i\neq j,\\
    D_j^{(q)}D_i^{(q)}&=D_i^{(q)}D_j^{(q)},\ \ &1\le i<j\le m.
    \end{align*}
Note that if $n=m$, then this operator algebra coincides with the
additive analogue $A_n(q_1,\dotsc,q_n)$ of the Weyl algebra
$A_n(q)$ (Example \ref{additivean}).
\end{enumerate}
\subsection{Diffusion algebras}\label{diffusionalgebra}
Following \cite{diffusionauthor}, a \textit{diffusion algebra}
$\cA$ is generated by $\{D_i,x_i\mid 1\le i\le n\}$ over $\Bbbk$
with relations
\begin{equation}
x_ix_j=x_jx_i, \ \ x_iD_j=D_jx_i, \ \ 1\leq i,j\leq n.
\end{equation}
\begin{equation}\label{diffusionrela}
c_{ij}D_iD_j-c_{ji}D_jD_i=x_jD_i-x_iD_j,\ \ i<j, c_{ij},c_{ji}\in K^{*}.
\end{equation}
Thus, $\cA\cong \sigma(\boldsymbol{\Bbbk[x_1,\dots,x_n]})\langle
D_1,\dotsc,D_n\rangle$.
\subsection{Quantum algebras}\label{Quantumalgebras}
\begin{enumerate}
\item [\rm (a)] \label{additivean} \textit{Additive analogue of the Weyl algebra.} The $\Bbbk$-algebra $A_n(q_1,\dots,q_n)$ is generated by
$x_1,\dots,x_n,$ $y_1,\dots,y_n$ subject to the relations:
\begin{align}
x_jx_i& = x_ix_j, & 1 \leq i,j \leq n,\\
y_jy_i&= y_iy_j, & 1 \leq i,j \leq n,\\
y_ix_j&=x_jy_i,& i\neq j,\\
y_ix_i&= q_ix_iy_i + 1,& 1\leq i\leq n,
\end{align}
where $q_i\in \Bbbk-\{0\}$. From the relations above we have
\begin{center}
$A_n(q_1, \dots, q_n)\cong \sigma(\boldsymbol{\Bbbk})\langle
x_1,\dotsc,x_n;y_1,\dots,y_n\rangle\cong
\sigma(\boldsymbol{\Bbbk[x_1,\dotsc,x_n]})\langle
y_1,\dots,y_n\rangle$.
\end{center}
\item [\rm (b)]\label{multiplicativeana} \textit{Multiplicative analogue of the Weyl algebra}.
The $\Bbbk$-algebra $\mathcal{O}_n(\lambda_{ji})$ is generated by $x_1,\dots,x_n$ subject to the relations:
\begin{center}
$x_jx_i =\lambda_{ji}x_ix_j ,\ 1\leq i<j\leq n$,
\end{center}
where $\lambda_{ji}\in \Bbbk-\{0\}$. We note that
$\mathcal{O}_n(\lambda_{ji})\cong\sigma(\Bbbk)\langle
x_1,\dotsc,x_n\rangle\cong \sigma(\boldsymbol{\Bbbk[x_1]})\langle
x_2,\dotsc,x_n\rangle$.
\item [\rm (c)]\textit{Quantum algebra $\cU'(\mathfrak{so}(3,\Bbbk))$} (cf. \cite{Havlicek} and \cite{Iorgov}).
It is the $\Bbbk$-algebra generated by $I_1,I_2,I_3$ subject to
relations
        \[
        I_2I_1-qI_1I_2=-q^{1/2}I_3,\ \ \ I_3I_1-q^{-1}I_1I_3=q^{-1/2}I_2,\ \ \
        I_3I_2-qI_2I_3=-q^{1/2}I_1,
        \]
where $q\in \Bbbk-\{0\}$. In this way, $\cU'(\mathfrak{so}(3,\Bbbk))\cong \sigma(\Bbbk)\langle I_1,
I_2, I_3 \rangle$.
\item [\rm (d)]\textit{$3$-dimensional skew polynomial algebra $\cA$.} \label{Some3dimensionall}
It is given by the relations
\begin{equation}\label{3-dimensionalconditions}
yz-\alpha zy=\lambda,\ \ \ \ zx-\beta xz=\mu,\ \ \ \ xy-\gamma
yx=\nu,
\end{equation}
such that $\lambda, \mu, \nu\in \Bbbk+\Bbbk x+\Bbbk y+\Bbbk z$, and $\alpha, \beta, \gamma \in
\Bbbk-\{0\}$. Thus, $\cA\cong \sigma(\Bbbk)\langle x,y,z\rangle$. There are fifteen $3$-dimensional
skew polynomial algebras not isomorphic. The complete list can be found in \cite{SmithBell} or
\cite{Rosenberg1}.
\item [\rm (e)]\label{dispinspbw} \textit{Dispin algebra $\cU(osp(1,2))$.} It is generated by $x,y,z$ over the commutative ring $k$ satisfying the relations
\[
yz-zy=z,\ \ \ zx+xz=y,\ \ \ xy-yx=x.
\]
Thus, $\cU(osp(1,2))\cong \sigma(\Bbbk)\langle x,y,z\rangle$.
\item [\rm (f)] \label{Worodefpbw} \textit{Woronowicz algebra $\cW_{\nu}(\mathfrak{sl}(2,\Bbbk))$.} This algebra was
introduced by Woronowicz in \cite{Woronowicz} and it is generated
by $x,y,z$ subject to the relations
\begin{gather*}
xz-\nu^4zx=(1+\nu^2)x,\ \ \ xy-\nu^2yx=\nu z,\ \ \
zy-\nu^4yz=(1+\nu^2)y,
\end{gather*}
where $\nu \in \Bbbk-\{0\}$ is not a root of unity. Then $\cW_{\nu}(\mathfrak{sl}(2,\Bbbk))\cong
\sigma (\Bbbk) \langle x,y,z\rangle$.
\item [\rm (g)]\textit{The complex algebra $V_q(\mathfrak{sl}_3(\mathbb{C}))$}. Let $q$ be a complex number
such that $q^8\neq 1$. Consider the complex algebra generated by
$e_{12},e_{13},e_{23},f_{12},f_{13},f_{23},k_1,k_2,l_1,l_2$ with
the following relations (cf. \cite{Yamane}):
\begin{align*}
    e_{13}e_{12}&=q^{-2}e_{12}e_{13}, & f_{13}f_{12}&= q^{-2}f_{12}f_{13},\\
    e_{23}e_{12}&= q^2e_{12}e_{23}-qe_{13}, & f_{23}f_{12}&=q^2f_{12}f_{23}-qf_{13},\\
    e_{23}e_{13}&=q^{-2}e_{13}e_{23}, & f_{23}f_{13}&=q^{-2}f_{13}f_{23},\\
    e_{12}f_{12}&=f_{12}e_{12}+\frac{k_1^2-l_1^2}{q^2-q^{-2}}, & e_{12}k_1&=q^{-2}k_1e_{12}, & k_1f_{12}&=q^{-2}f_{12}k_1,\\
    e_{12}f_{13}&=f_{13}e_{12}+qf_{23}k_1^2, & e_{12}k_2&=qk_2e_{12}, & k_2f_{12}&=qf_{12}k_2,\\
    e_{12}f_{23}&=f_{23}e_{12}, & e_{13}k_1&=q^{-1}k_1e_{13}, & k_1f_{13}&=q^{-1}f_{13}k_1,\\
    e_{13}f_{12}&=f_{12}e_{13}-q^{-1}l_1^2e_{23}, & e_{13}k_2&=q^{-1}k_2e_{13}, & k_2f_{13}&=q^{-1}f_{13}k_2,\\
    e_{13}f_{13}& =f_{13}e_{13}-\frac{k_1^2k_2^2-l_1^2l_2^2}{q^2-q^{-2}}, & e_{23}k_1&=qk_1e_{23}, & k_1f_{23}&=qf_{23}k_1,\\
    e_{13}f_{23}&=f_{23}e_{13}+qk_2^2e_{12}, & e_{23}k_2&=q^{-2}k_2e_{23}, & k_2f_{23}&=q^{-2}f_{23}k_2,\\
    e_{23}f_{12}&=f_{12}e_{23}, & e_{12}l_1&=q^2l_1e_{12}, & l_1f_2&=q^2f_{12}l_1,\\
    e_{23}f_{13}&=f_{13}e_{23}-q^{-1}f_{12}l_2^2, & e_{12}l_2&=q^{-1}l_2e_{12}, & l_2f_{12}&=q^{-1}f_{12}l_2,\\
    e_{23}f_{23}&=f_{23}e_{23}+\frac{k^2_2-l^2_2}{q^2-q^{-2}}, & e_{13}l_1&=ql_1e_{13}, & l_1f_{13}&=qf_{13}l_1,\\
    e_{13}l_2&=ql_2e_{13}, & l_2f_{13}&=qf_{13}l_2, & e_{23}l_1&=q^{-1}l_1e_{23},\\
    l_1f_{23}&=q^{-1}f_{23}l_1, & e_{23}l_2&=q^2l_2e_{23}, & l_2f_{23}&=q^2f_{23}l_2,\\
    l_1k_1&=k_1l_1, & l_2k_1&=k_1l_2, & k_2k_1&=k_1k_2,\\
    l_1k_2&=k_2l_1, & l_2k_2&=k_2l_2, & l_2l_1&=l_1l_2.
    \end{align*}
We can see from these relations that this algebra is a bijective
skew $PBW$ extension of the polynomial ring
$\Bbbk[l_1,l_2,k_1,k_2]$, that is,
$V_q(\mathfrak{sl}_3(\mathbb{C}))\cong
\sigma(\boldsymbol{\Bbbk[l_1,l_2,k_1,k_2]})\langle e_{12}, e_{13},
e_{23}, f_{12}, f_{13}, f_{23}\rangle$.
\item [\rm (h)]\label{algebraU} \textit{The algebra} $\textbf{U}$.
Let $\textbf{U}$ be the algebra generated over the field
$\Bbbk=\mathbb{C}$ by the set of variables $x_i,y_i,z_i$, $1\leq
i\leq n$ subject to the relations:
\begin{align*}
x_jx_i &=x_ix_j,\ y_jy_i =y_iy_j,\ z_jz_i=z_iz_j, \ 1\le i,j\le n,\\
y_jx_i&=q^{\delta_{ij}}x_iy_j,\ z_jx_i =q^{-\delta_{ij}}x_iz_j, \ 1\le i,j \le n,\\
z_jy_i &=y_iz_j, \ i\neq j,\ z_iy_i =q^{2}y_iz_i-q^2x_i^2, \ 1\le
i\le n,
\end{align*}
where $q\in \Bbbk-\{0\}$. From the relations above we see that the
algebra $\textbf{U}$ is a is a bijective skew $PBW$ extension of
$\Bbbk[x_1,\dotsc,x_n]$, that is, $
\textbf{U}\cong\sigma(\boldsymbol{\Bbbk[x_1,\dotsc,x_n]}) \langle
y_1,\dotsc,y_n;z_1,\dotsc,z_n\rangle$.
\item[\rm (i)]\label{M_qspbw} \textit{The coordinate algebra of the quantum matrix space $M_q(2)$.}
This algebra is also known as \textit{Manin algebra of $2\times 2$
quantum matrices} (cf. \cite{Li} and \cite{Man}). By definition,
$\cO_q(M_2(\Bbbk))$, also denoted $\cO(M_q(2))$, is the coordinate
algebra of the quantum matrix space $M_q(2)$, it is the
$\Bbbk$-algebra generated by the variables $x,y,u,v$
satisfying the relations
\begin{equation}\label{3com}
xu=qux,\ \ \ \ \ \ yu=q^{-1}uy,\ \ \ \ \ \ vu=uv,
\end{equation}
and
\begin{equation}\label{4com}
xv=qvx,\ \ \ \ \ \ \ vy=qyv,\ \ \ \ \ \ \ \ yx-xy=-(q-q^{-1})uv,
\end{equation}
where $q\in \Bbbk-\{0\}$. Thus, $\cO(M_q(2))\cong \sigma(\boldsymbol{\Bbbk[u]})\langle
x,y,v\rangle$. Due to the last relation in (\ref{4com}), we remark that it is not possible to
consider $\cO(M_q(2))$ as an skew $PBW$ extension of $\Bbbk$. This algebra can be generalized to
$n$ variables, $\cO_q(M_n(\Bbbk))$, and coincides with the \textit{coordinate algebra of the
quantum group $SL_q(2)$}, see \cite{Gomez-Torrecillas2} for more details.
\item [\rm (j)]\label{QHeispbw} \textit{$q$-Heisenberg algebra}. The $\Bbbk$-algebra \textbf{H}$_n(q)$ is generated by the set of variables $x_1,\dots,x_n,$ $y_1,\dots,y_n,$ $z_1,\dots,z_n$ subject to the
relations:
\begin{align}
x_jx_i &= x_ix_j, &z_jz_i&=z_iz_j, & y_jy_i &= y_iy_j, \ \ 1 \leq
i,j
\leq n,\label{4.10}\\
z_jy_i&=y_iz_j, &z_jx_i&=x_iz_j, &y_jx_i&=x_iy_j, \ \ i\neq j,\label{4.11}\\
z_iy_i &= qy_iz_i, &z_ix_i&=q^{-1}x_iz_i+y_i, & y_ix_i &= qx_iy_i,
\ 1\leq i\leq n,\label{4.12}
\end{align}
with $q\in \Bbbk-\{0\}$. Note that
\[
\textbf{H}_n(q)\cong \sigma(\Bbbk)\langle
x_1,\dots,x_n;y_1,\dots,y_n;z_1,\dots,z_n\rangle\cong\sigma(\boldsymbol{\Bbbk[y_1,\dotsc,y_n]})\langle
x_1,\dotsc,x_n;z_1,\dotsc,z_n\rangle.
\]
\item [\rm (k)] \label{qeasl2k} \textit{Quantum enveloping algebra of $\mathfrak{sl}(2,\Bbbk)$.} $\cU_q(\mathfrak{sl}(2,\Bbbk))$ is defined
as the algebra generated by $x,y,z,z^{-1}$ with relations
\begin{gather}
zz^{-1}= z^{-1}z=1, \label{Kassel1.10}\\
xz= q^{-2}zx,\ \ \ yz= q^{2}zy, \label{Kassel1.11}\\
xy-yx= \frac{z-z^{-1}}{q-q^{-1}}, \label{Kassel1.12}
\end{gather}
where $q\neq 1,-1$. The above relations show that
$\cU_q(\mathfrak{sl}(2,\Bbbk))=\sigma(\boldsymbol{\Bbbk[z,z^{-1}]})\langle x,y\rangle$. This
example can be extended to the quantized enveloping algebra $U_q(\mathcal{L})$ of a complex
semisimple finite-dimensional Lie algebra $\mathcal{L}$ with Cartan matrix $[a_{ij}]$, such that
$a_{ij}=0$ for $i\neq j $ (see \cite{Brown}, and also \cite{Kassel}).
\item [\rm (l)]\textit{Hayashi algebra $W_q(J)$.}\label{Hayhypspbw}
T. Hayashi in \cite{Ha} defined the quantized $\Bbbk$-algebra $W_q(J)$ generated by $x_i, y_i, z_i,
1\leq i\leq n$, subject to the relations (\ref{4.10})-(\ref{4.12}) replacing
$z_ix_i=q^{-1}x_iz_i+y_i$ by
\begin{equation}\label{hayres}
(z_ix_i-qx_iz_i)y_i=1=y_i(z_ix_i-qx_iz_i),\ \ i=1,\dotsc,n,
\end{equation}
with $q\in \Bbbk-\{0\}$. Note that $W_q(J)$ is a $\sigma-PBW$ extension of $\Bbbk[y_1^{\pm
1},\dotsc,y_n^{\pm 1}]$. In fact,
\begin{align}
x_iy_j^{-1}=y_j^{-1}x_i,\ z_iy_j^{-1}=y_j^{-1}z_i,\
y_jy_j^{-1}=y^{-1}_jy_j=1,\ z_ix_i=qx_iz_i+y_i^{-1}, \ 1\le i,j\le
n\label{ggfz}.
\end{align}
Hence, $W_q(J)\cong\sigma(\boldsymbol{\Bbbk[y_1^{\pm
1},\dotsc,y_n^{\pm 1}]})\langle
x_1,\dotsc,x_n;z_1,\dotsc,z_n\rangle$.
\item [\rm (m)]\label{e.1.3spbw} \textit{The algebra of differential operators} $D_{\textbf{q}}(S_{\textbf{q}})$ \textit{on a quantum space} $S_{\textbf{q}}$.
Let $k$ be a commutative ring and let $\textbf{q}=[q_{ij}]$ be a matrix with entries in $k^*$ such
that $q_{ii}=1=q_{ij}q_{ji}$ for all $1\leq i,j \leq n$. The $k$-algebra $S_{\textbf{q}}$ is
generated by $x_i, 1\leq i\leq n$, subject to the relations
\begin{equation}\label{nre}
x_ix_j=q_{ij}x_jx_i.
\end{equation}
The algebra $S_{\textbf{q}}$ is regarded as the \textit{algebra of
functions on a quantum space} (see also Section \ref{sec5}). The
algebra $D_{\textbf{q}}(S_{\textbf{q}})$ of
$\textbf{q}$-differential operators on $S_{\textbf{q}}$ is defined
by
\begin{equation}\label{wfwrrg}
\partial_ix_j-q_{ij}x_j\partial_i=\delta_{ij}\ \ \textrm{for\
all}\ i,j.
\end{equation}
The relations between $\partial_i$, are given by
\begin{equation}\label{nwq}
\partial_i\partial_j=q_{ij}\partial_j\partial_i,\ \ \textrm{for\
all}\ i,j.
\end{equation}
Therefore, $D_{\textbf{q}}(S_{\textbf{q}})\cong
\sigma(\boldsymbol{\sigma(k)\langle x_1,\dotsc,x_n\rangle)}\langle
\partial_1,\dotsc,\partial_n\rangle$. More exactly,
$D_{\textbf{q}}(S_{\textbf{q}})$ is a bijective skew $PBW$ extension of a quasi-commutative
bijective skew $PBW$ extension of a commutative ring $k$.
\item [\rm (n)]\textit{Witten's deformation of} $\cU(\mathfrak{sl}(2,\Bbbk)$. E. Witten introduced and
studied a 7-parameter deformation of the universal enveloping
algebra $\cU(\mathfrak{sl}(2,\Bbbk))$ depending on a 7-tuple of
parameters $\underline{\xi}=(\xi_1,\dotsc,\xi_7)$ and subject to
relations
    \[
    xz-\xi_1zx=\xi_2x,\ \ \ zy-\xi_3yz=\xi_4y,\ \ \ \ yx-\xi_5xy=\xi_6z^2+\xi_7z.
    \]
    The resulting algebra is denoted by $W(\underline{\xi})$. In \cite{Levandovskyy} it is assumed that $\xi_1\xi_3\xi_5\neq 0$.
    Note that that $W(\underline{\xi})\cong \sigma(\boldsymbol{\sigma(\Bbbk[x])\langle z\rangle})\langle y\rangle$.
\item [\rm (o)] \label{QWMalt} \textit{Quantum Weyl algebra of Maltsiniotis} $A_n^{\textbf{q},\lambda}$. Let $k$ be a commutative ring and
$\textbf{q}=[q_{ij}]$ a matrix over $k$ such that $q_{ij}q_{ji}=1$ and $q_{ii}=1$ for all $1\le
i,j\le n$. Fix an element $\lambda:=(\lambda_1,\dotsc, \lambda_n)$ of $(k^*)^n$. By definition,
this algebra is generated by the variables $x_i,y_j,\ 1\le i,j\le n$ subject to the relations
\begin{enumerate}
\item [\rm (a)]For any $1\le i < j\le n$,
\begin{align*}
x_ix_j&=\lambda_iq_{ij}x_jx_i, & y_iy_j & =q_{ij}y_jy_i,\\
x_iy_j&=q_{ji}y_jx_i, & y_ix_j & =\lambda_i^{-1}q_{ji}x_jy_i.
\end{align*}
\item [\rm (b)]For any $1\le i\le n$,
\begin{equation*}
x_iy_i-\lambda_iy_ix_i=1+\sum_{1\le j< i}(\lambda_j-1)y_jx_j.
\end{equation*}
\end{enumerate}
From relations above we have that $A_n^{\textbf{q},\lambda}$ is isomorphic to a bijective skew
$PBW$ extension,
\begin{center}
$A_n^{\textbf{q},\lambda}\cong \sigma(\boldsymbol{\sigma(\cdots \sigma(\sigma(k)\langle
x_1,y_1\rangle)\langle x_2,y_2\rangle)\cdots)\langle x_{n-1},y_{n-1}\rangle})\langle
x_n,y_n\rangle$.
\end{center}
\item [\rm (p)] \textit{Quantum Weyl algebra $A_n(q,p_{i,j})$.} The ring $A_n(q,p_{i,j})$ arising in
\cite{Giaquinto} as $A_n(R)$ for the \textquotedblleft standard"
multiparameter Hecke symmetry. This ring can be viewed as a
quantization of the usual Weyl algebra $A_n(\Bbbk)$. By
definition, $A_n(q,p_{i,j})$ is the ring generated over the field
$\Bbbk$ by the variables $x_i,\partial_j$ with $i,j=1,\dotsc,n$
and subject to relations
    \begin{align*}
    & x_ix_j=p_{ij}qx_jx_i, & {\rm for\ all}\ i<j,\\
    & \partial_i\partial_j=p_{ij}q^{-1}\partial_j\partial_i, & {\rm for\ all}\ i<j,\\
    & \partial_ix_j=p_{ij}^{-1}qx_j\partial_i, & {\rm for\ all}\ i\neq j,\\
    & \partial_i x_i=1+q^2x_i\partial_i+(q^2-1)\sum_{i<j}x_j\partial_j, & {\rm for\ all}\ i.
    \end{align*}
When $q=1$ and each $p_{ij}=1$, these relations give the usual Weyl algebra $A_n(\Bbbk)$. From
relations above we have that $A_n(q,p_{i,j})$ is a bijective skew $PBW$ extension,
\begin{center}
$A_n(q,p_{i,j})\cong \sigma(\boldsymbol{\sigma(\cdots \sigma(\sigma(\Bbbk)\langle
x_{n},\partial_{n}\rangle)\langle x_{n-1},\partial_{n-1}\rangle)\cdots)\langle
x_{2},\partial_{2}\rangle})\langle x_1,\partial_1\rangle$.
\end{center}
\item [\rm (q)] \textit{Multiparameter quantized Weyl algebra} $A_n^{Q,\Gamma}(\Bbbk)$. Let $Q:=[q_1,\dotsc,q_n]$ be a vector in $\Bbbk^{n}$
with no zero components, and let $\Gamma=[\gamma_{ij}]$ be a multiplicatively antisymmetric
$n\times n$ matrix over $\Bbbk$. The multiparameter Weyl algebra $A_n^{Q,\Gamma}(\Bbbk)$ is defined
to be the algebra generated by $\Bbbk$ and the indeterminates $y_1,\dotsc,y_n,x_1,\dotsc,x_n$
subject to the relations:
    \begin{align*}
    & y_iy_j=\gamma_{ij}y_jy_i, & 1\le i,j\le n,\\
    & x_ix_j=q_i\gamma_{ij}x_jx_i, & 1\le i<j\le n,\\
    & x_iy_j=\gamma_{ji}y_jx_i, & 1\le i<j\le n,\\
    & x_iy_j=q_j\gamma_{ji}y_jx_i, & 1\le j<i\le n,,\\
    & x_jy_j=q_jy_jx_j +1 +\sum_{l<j}(q_l-1)y_lx_l, & 1\le j\le n.
    \end{align*}
From relations above we have that $A_n^{Q,\Gamma}(\Bbbk)$ is isomorphic to a bijective skew $PBW$
extension,
\begin{center}
$A_n^{Q,\Gamma}(\Bbbk)\cong \sigma(\boldsymbol{\sigma(\cdots \sigma(\sigma(k)\langle
x_1,y_1\rangle)\langle x_2,y_2\rangle)\cdots)\langle x_{n-1},y_{n-1}\rangle})\langle
x_n,y_n\rangle$.
\end{center}
\item [\rm (r)] \textit{Quantum symplectic space} $\cO_q(\mathfrak{sp}(\Bbbk^{2n}))$. For every nonzero element $q$ in $\Bbbk$,
one defines this quantum algebra $\cO_q(\mathfrak{sp}(\Bbbk^{2n}))$ to be the algebra generated by $\Bbbk$ and the variables $y_1,\dotsc,y_n,x_1,\dotsc,x_n$ subject to the relations
\begin{align*}
y_jx_i& =q^{-1}x_iy_j, \ \ \ \ \ \ \ \ \ \ y_jy_i=qy_iy_j, & 1\le i<j\le n,\\
x_jx_i& =q^{-1}x_ix_j, \ \ \ \ \ \ \ \ \ x_jy_i=qy_ix_j, & 1\le i<j\le n,\\
x_iy_i-q^2y_ix_i& =(q^2-1)\sum_{l=1}^{i-1}q^{i-l}y_lx_l, & 1\le i\le n.
\end{align*}
From relations above we have that $\cO_q(\mathfrak{sp}(\Bbbk^{2n}))$ is isomorphic to a bijective
skew $PBW$ extension,
\begin{center}
$\cO_q(\mathfrak{sp}(\Bbbk^{2n}))\cong \sigma(\boldsymbol{\sigma(\cdots \sigma(\sigma(k)\langle
x_1,y_1\rangle)\langle x_2,y_2\rangle)\cdots)\langle x_{n-1},y_{n-1}\rangle})\langle
x_n,y_n\rangle$.
\end{center}
\end{enumerate}
\subsection{Quadratic algebras in 3 variables}\label{quadraticalgebras3varia}
Quadratic algebras in 3 variables are considered as a class of
$G$-algebras in 3 variables which relations are homogeneous of
degree 2 (cf. \cite{Levandovskyy} for more details). More exactly,
a quadratic algebra in 3 variables $\cA$ is a $\Bbbk$-algebra
generated by $x,y,z$ subject to the relations
\begin{align*}
yx & = xy+a_1z+a_2y^2+a_3yz+\xi_1z^2,\\
zx & = xz+\xi_2y^2+a_5yz+a_6z^2,\\
zy & =yz+a_4z^2.
\end{align*}
Note that these algebras are examples of bijective
$\sigma$-\textit{PBW} extensions. In fact, consider the bijective
skew \textit{PBW} extension $\sigma(\Bbbk[z])\langle y\rangle$ of
$\Bbbk[z]$ defined by the relation $yz-zy=-a_4z^2$. Then
$\cA\cong \sigma(\boldsymbol{\sigma(\Bbbk[z])\langle
y\rangle)}\langle x\rangle$.
\begin{remark}
(i) From Corollaries \ref{1.3.4}, \ref{3.2.1a}, \ref{3.6}, and assuming that $\boldsymbol{R}$ and
$\boldsymbol{k}$ are left Noetherian, left regular and $PSF$ rings, we can conclude that all rings
and algebras presented above are left Noetherian, left regular and $PSF$ rings. Li (\cite{Li})
shows that linear solvable polynomial algebras are left Noetherian, left regular and \textit{PSF}
rings (all these algebras considered over a field $\Bbbk$). We want to remark that the results
obtained in the present paper generalized the results of Li because we deal with extensions of
rings more general than fields.

(ii) Viktor Levandovskyy has defined in \cite{Levandovskyy} the $G$-algebras; let $\Bbbk$ be a
field, a $\Bbbk$-algebra $A$ is called a \textit{$G$-algebra} if $\Bbbk \subset Z(A)$ (center of
$A$) and $A$ is generated by a finite set $\{x_1, \ldots, x_n\}$ of elements that satisfy the
following conditions: (a) the collection of standard monomials of $A$ is a $\Bbbk$-basis of $A$.
(b) $x_jx_i= c_{ij}x_ix_j+d_{ij}$, for $1 \leq i < j \leq n$, with $c_{ij} \in \Bbbk-\{0\}$ and
$d_{ij}\in A$. (c) There exists a total order $<_A$ on ${\rm Mon}(A)$ such that for $i<j$,
$lm(d_{ij}) <_A x_ix_j$. According to this definition, $G$-algebras appear like more general than
skew $PBW$ extensions since $d_{ij}$ is not necessarily linear, however, in $G$-algebras the
coefficients of polynomials are in a field and they commute with the variables $x_1,\dots,x_n$.
However, note that the class of $G$-algebras does not include the class of skew $PBW$ extensions
over fields. For example, consider the $\Bbbk$-algebra $\cA$ generated by $x,y,z$ subject to the
relations
\[
yx-q_2xy=x,\ \ \ \ \ \ zx-q_1xz=z,\ \ \ \ \ \ \ \ zy=yz, \ \ \ \ q_1,q_2\in \Bbbk.
\]
Then $\cA$ is not a $G$-algebra in the sense of \cite{Levandovskyy}. Note that if $q_1,q_2\neq 0$,
then $\cA\cong \sigma(\Bbbk)\langle x,y,z\rangle$.

Another interesting and non trivial example is related to Witten's algebras: In \cite{Levandovskyy}
it is shown that the only possible $G$-algebra with all three $q$-commutators being nonzero is of
the form
    \[
    xz-qzx=\xi_2x,\ \ \ zy-qyz=\xi_2y,\ \ \ \ yx-\xi_5xy=\xi_6z^2+\xi_7z,
    \]
    that is, considering $q=\xi_1=\xi_3$ and $\xi_2=\xi_4$ with $q,\xi_2,\xi_5\in \Bbbk-\{0\}$ and $\xi_6,\xi_7\in \Bbbk$.

(iii) A similar remark can be done with respect to $PBW$ rings and algebras defined by Bueso,
Gómez-Torrecillas and Verschoren in \cite{Gomez-Torrecillas2}. For any $0\neq q\in \Bbbk$, let $R$
be an algebra generated by the variables $a,b,c,d$ subject to the relations
\begin{align*}
ba&=qab,\ \ \ \ db=qbd,\ \ \ \ ca=qac,\ \ \ \ dc=qcd\\
bc&=\mu cb,\ \ \ \ ad-da=(q^{-1}-q)bc.
\end{align*}
for some $\mu\in \Bbbk$. Then $R$ is not a $PBW$ ring (and hence is not a $PBW$ algebra) unless
$\mu=1$ (see \cite{Gomez-Torrecillas2}). Note that this algebra is a bijective skew $PBW$ extension
of $\Bbbk[b]$, that is, isomorphic to $\sigma(\Bbbk[b])\langle a,c,d\rangle$.
\end{remark}

\subsection{Skew quantum polynomials over rings}\label{sec5}
In this subsection we define the \textit{skew quantum polynomials over rings} and we will show that
the \textit{algebra of quantum polynomials} defined in \cite{Artamonov} and \cite{Artamonov2} is a
particular case of this kind of non-commutative rings. We will see also that the ring of skew
quantum polynomials is a localization of a quasi-commutative bijective skew $PBW$ extension. From
this, and using the results of the previous sections, we can conclude that these rings are left
Noetherian, left regular and $PSF$.

\begin{example}\label{1.4.2}
Let $R$ be a ring with a fixed matrix of parameters
$\textbf{q}:=[q_{ij}]\in M_n(R)$, $n\geq 2$, such that
$q_{ii}=1=q_{ij}q_{ji}=q_{ji}q_{ij}$ for every $1\leq i,j\leq n$,
and suppose also that it is given a system
$\sigma_1,\dots,\sigma_n$ of automorphisms of $R$. The ring of
\textit{skew quantum polynomials over $R$}, denoted by
$R_{\textbf{q},\sigma}[x_1^{\pm 1 },\dots,x_r^{\pm 1},
x_{r+1},\dots,x_n]$, is defined as follows:
\begin{enumerate}
\item[\rm{(i)}]$R\subseteq R_{\textbf{q},\sigma}[x_1^{\pm 1},\dots,x_r^{\pm 1},
x_{r+1},\dots,x_n]$;
\item[\rm{(ii)}]$R_{\textbf{q},\sigma}[x_1^{\pm 1
},\dots,x_r^{\pm 1}, x_{r+1},\dots,x_n]$ is a free left $R$-module
with basis
\begin{equation}\label{equ1.4.2}
\{x_1^{\alpha_1}\cdots x_n^{\alpha_n}|\alpha_i\in \mathbb{Z} \
\text{for}\ 1\leq i\leq r \ \text{and} \ \alpha_i\in \mathbb{N}\
\text{for}\ r+1\leq i\leq n\};
\end{equation}
\item[\rm{(iii)}] the variables $x_1,\dots,x_n$ satisfy the defining relations
\begin{center}
$x_ix_i^{-1}=1=x_i^{-1}x_i$, $1\leq i\leq r$,

$x_jx_i=q_{ij}x_ix_j$, $x_ir=\sigma_i(r)x_i$, $r\in R$, $1\leq
i,j\leq n$.
\end{center}
\end{enumerate}
When all automorphisms are trivial, we write
$R_{\textbf{q}}[x_1^{\pm 1 },\dots,x_r^{\pm 1},
x_{r+1},\dots,x_n]$, and this ring is called the ring of
\textit{quantum polynomials over $R$}. If $R=\Bbbk$ is a field,
then $\Bbbk_{\textbf{q},\sigma}[x_1^{\pm 1 },\dots,x_r^{\pm 1},
x_{r+1},\dots,x_n]$ is the \textit{algebra of skew quantum
polynomials}. For trivial automorphisms we get the \textit{algebra
of quantum polynomials} simply denoted by $\mathcal{O}_\textbf{q}$
(see \cite{Artamonov}).

$R_{\textbf{q},\sigma}[x_1^{\pm 1},\dots,x_r^{\pm 1}, x_{r+1},\dots,x_n]$ can be viewed as a
localization of an skew $PBW$ extension. In fact, we have the quasi-commutative bijective skew
$PBW$ extension
\begin{center}
$A:=\sigma(R)\langle x_1,\dots, x_n\rangle$,\ \ with\ \
$x_ir=\sigma_i(r)x_i$\ \ and\ \ $x_jx_i=q_{ij}x_ix_j$, $1\leq
i,j\leq n$.
\end{center}
If we set
\begin{center}
$S:=\{rx^{\alpha}\mid r\in R^*, x^{\alpha}\in {\rm Mon}\{x_1,\dots,x_r\}\}$,
\end{center}
then $S$ is a multiplicative subset of $A$ and
\begin{center}
$S^{-1}A\cong R_{\textbf{q},\sigma}[x_1^{\pm 1},\dots,x_r^{\pm 1},
x_{r+1},\dots,x_n]$.
\end{center}
In fact, if $f\in A$ and $rx^{\alpha}\in S$ are such that $frx^{\alpha}=0$, then
$0=frx^{\alpha}=fx^{\alpha}[(\sigma^{\alpha})^{-1}(r)]$, so $0=fx^{\alpha}$ since
$(\sigma^{\alpha})^{-1}(r)\in R^*$, and hence, $f=0$. From this we get that $rx^{\alpha}f=0$. $S$
satisfies the left (right) Ore condition: if $ f=c_1x^{\beta_1}+\cdots+c_tx^{\beta_t}$, then
$grx^{\alpha}=x^{\alpha}f$, where $g:=d_1x^{\beta_1}+\cdots+d_tx^{\beta_t}$ with
$d_i:=\sigma^{\alpha}(c_i)c_{\alpha,\beta_i}c_{\beta_i,\alpha}^{-1}\sigma^{\beta_i}(r^{-1})$, and
$c_{\alpha,\beta_i}$, $c_{\beta_i,\alpha}$ are the elements of $R$ that we obtain when we applying
Theorem \ref{coefficientes} to $A$ (for the right Ore condition $g$ is defined in a similar way).
This means that $S^{-1}A$ exists ($AS^{-1}$ also exists, and hence, $S^{-1}A\cong AS^{-1}$).

Finally, note that the function
\begin{center}
$h':A\to R_{\textbf{q},\sigma}[x_1^{\pm 1},\dots,x_r^{\pm 1},
x_{r+1},\dots,x_n]$, $h'(f):=f$
\end{center}
is a ring homomorphism and satisfies $h'(S)\subseteq
R_{\textbf{q},\sigma}[x_1^{\pm 1},\dots,x_r^{\pm 1},
x_{r+1},\dots,x_n]^*$ (in fact,
$[rx^{\alpha}]^{-1}=(\sigma^\alpha)^{-1}(r^{-1})(x^{\alpha})^{-1}$),
so $h'$ induces the ring homomorphism
\begin{center}
$h:S^{-1}A\to R_{\textbf{q},\sigma}[x_1^{\pm 1},\dots,x_r^{\pm 1},
x_{r+1},\dots,x_n]$,
$h(\frac{f}{rx^{\alpha}}):=h'(rx^{\alpha})^{-1}h'(f)=(rx^{\alpha})^{-1}f.$
\end{center}
It is clear that $h$ is injective; moreover, $h$ is surjective
since $x_i=h(\frac{x_i}{1})$, $1\leq i\leq n$,
$x_j^{-1}=h(\frac{1}{x_j})$, $1\leq j\leq r$, $r=h(r)$, $r\in R$.
\end{example}
When $r=0$, $R_{\textbf{q},\sigma}[x_1^{\pm 1},\dots,x_r^{\pm 1},
x_{r+1},\dots,x_n]=R_{\textbf{q},\sigma}[x_1,\dots,x_n]$ is the
\textit{$n$-mul\-ti\-pa\-ra\-me\-tric skew quantum space over $R$}, and when $r=n$, it coincides
with $R_{\textbf{q},\sigma}[x_1^{\pm 1},\dots,x_n^{\pm 1}]$, i.e., with the
\textit{$n$-multiparametric skew quantum torus over $R$}. In this case, if $n=1$,
$R_{\textbf{q},\sigma}[x_1^{\pm 1},\dots,x_n^{\pm 1}]=R[x^{\pm 1};\sigma]$, i.e., this ring
coincides with the \textit{skew Laurent polynomial ring over $R$}.
\begin{remark}\label{3.3a}
If $R$ is a left Noetherian, left regular and $PSF$, then $R_{\textbf{q},\sigma}[x_1^{\pm
1},\dots,x_r^{\pm 1}, x_{r+1},\dots,x_n]$ is left Noetherian, left regular and $PSF$: this is
consequence also of Corollaries \ref{1.3.4}, \ref{3.2.1a}, \ref{3.6}, Example \ref{1.4.2} and also
from the behavior of the noetherianity, regularity and $PSF$ condition under the ring of fractions
(see \cite{Bell2} and \cite{McConnell}). Compare this result with \cite{Artamonov} and
\cite{Artamonov2}.
\end{remark}
\section{Dimensions}
In this section we estimate the global, Krull and Goldie dimensions of bijective skew \textit{PBW}
extensions. A preliminary elementary property of skew $PBW$ extensions is needed.
\begin{proposition}\label{1.1.10}
Let $A$ be an skew $PBW$ extension of a ring $R$. If $R$ is a domain, then $A$ is a domain.
\end{proposition}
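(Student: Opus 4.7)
The plan is to use the leading-term machinery already set up in Theorem \ref{coefficientes} together with the monotonicity of the leader order in Remark \ref{identities}(v). Let $f,g\in A\setminus\{0\}$ with standard presentations $f=\sum_{i=1}^t c_i x^{\alpha_i}$ and $g=\sum_{j=1}^s d_j x^{\beta_j}$, ordered so that $x^{\alpha_1}\succ\cdots\succ x^{\alpha_t}$ and $x^{\beta_1}\succ\cdots\succ x^{\beta_s}$. I will pin down the coefficient of the candidate leader monomial $x^{\alpha_1+\beta_1}$ in $fg$ and show it is nonzero, which forces $fg\neq 0$.

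First I expand an arbitrary summand $c_ix^{\alpha_i}d_jx^{\beta_j}$ using formulas (\ref{611}) and (\ref{612}):
\[
c_ix^{\alpha_i}d_jx^{\beta_j}=c_i\sigma^{\alpha_i}(d_j)c_{\alpha_i,\beta_j}x^{\alpha_i+\beta_j}+c_i\sigma^{\alpha_i}(d_j)p_{\alpha_i,\beta_j}+c_ip_{\alpha_i,d_j}x^{\beta_j}.
\]
The last two summands have total degree strictly less than $|\alpha_i|+|\beta_j|\le|\alpha_1|+|\beta_1|$ (the degree bound on $p_{\alpha_i,d_j}$ only improves after right-multiplication by $x^{\beta_j}$, as one checks via (\ref{612}) term by term), so they cannot contribute to the monomial $x^{\alpha_1+\beta_1}$.

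Next I need to rule out unwanted contributions from the first summand when $(i,j)\ne(1,1)$. Applying Remark \ref{identities}(v) with $x^{\gamma}=x^0$, $x^{\lambda}=x^{\beta_j}$ (used via the symmetric statement on the right side of the order-preserving shift) gives $x^{\alpha_1+\beta_j}\succ x^{\alpha_i+\beta_j}$ whenever $i>1$; applying it with $x^{\gamma}=x^{\alpha_i}$, $x^{\lambda}=x^0$ gives $x^{\alpha_i+\beta_1}\succ x^{\alpha_i+\beta_j}$ whenever $j>1$. Combining, $x^{\alpha_1+\beta_1}\succ x^{\alpha_i+\beta_j}$ for every $(i,j)\neq(1,1)$, and in particular $x^{\alpha_i+\beta_j}\neq x^{\alpha_1+\beta_1}$. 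Hence the coefficient of $x^{\alpha_1+\beta_1}$ in $fg$ collapses to the single term $c_1\,\sigma^{\alpha_1}(d_1)\,c_{\alpha_1,\beta_1}$.

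Finally this coefficient is nonzero: $c_1,d_1\in R\setminus\{0\}$ by definition of standard presentations; the endomorphism $\sigma^{\alpha_1}=\sigma_1^{\alpha_{1,1}}\cdots\sigma_n^{\alpha_{1,n}}$ is injective as a composition of the injective endomorphisms $\sigma_i$ from Proposition \ref{sigmadefinition}, so $\sigma^{\alpha_1}(d_1)\neq0$; and $c_{\alpha_1,\beta_1}$ is left invertible by Theorem \ref{coefficientes}(b), so it is nonzero. Since $R$ is a domain, the product of these three nonzero elements is nonzero, and therefore $lt(fg)=c_1\,\sigma^{\alpha_1}(d_1)\,c_{\alpha_1,\beta_1}\,x^{\alpha_1+\beta_1}\neq0$, proving $fg\neq0$. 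The only place one must be careful is the second step—verifying that no distinct pair $(i,j)$ accidentally reproduces the top monomial—and this is exactly the content of the monotonicity property in Remark \ref{identities}(v).
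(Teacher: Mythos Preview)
Your proof is correct and follows essentially the same direct approach as the paper: identify the leading term of $fg$ as $c_1\,\sigma^{\alpha_1}(d_1)\,c_{\alpha_1,\beta_1}\,x^{\alpha_1+\beta_1}$ via Theorem~\ref{coefficientes} and conclude it is nonzero because $R$ is a domain. The paper's write-up is slightly more compact (it sets $f=lt(f)+p$, $g=lt(g)+q$ rather than carrying the full sums) and also notes an alternative route via Theorems~\ref{1.3.2}--\ref{1.3.3} and standard filtered--graded transfer, but your argument is the same leading-term computation.
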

\begin{proof}
This follows from Theorem \ref{1.3.2}, Theorem \ref{1.3.3}, and Theorem 1.2.9 and Proposition 1.6.6
in \cite{McConnell}. However, we can give also a direct proof. Let $f=cx^{\alpha}+p$,
$g=dx^{\beta}+ q$ be non zero elements of $A$, with $cx^{\alpha}=lt(f)$, $dx^{\beta}=lt(g)$, so
$c,d\neq 0$, $x^{\alpha}\succ lm(p)$ and $x^{\beta}\succ lm(q)$. We get
\begin{align*}
fg&=(cx^{\alpha}+p)(dx^{\beta}+ q)=cx^{\alpha}dx^{\beta}+cx^{\alpha}q+pdx^{\beta}+pq
  =c(d_{\alpha}x^{\alpha}+p_{\alpha,d})x^{\beta}+cx^{\alpha}q+pdx^{\beta}+pq,
\end{align*}
with $0\neq d_{\alpha}=\sigma^{\alpha}(d)\in R$, $p_{\alpha,d}\in A$, $p_{\alpha,d}=0$ or
$\deg(p_{\alpha,d})<|\alpha|$. Hence,
\begin{align*}
fg&=cd_{\alpha}x^{\alpha}x^{\beta}+cp_{\alpha,d}x^{\beta}+cx^{\alpha}q+pdx^{\beta}+pq
  =cd_{\alpha}(c_{\alpha,\beta}x^{\alpha+\beta}+p_{\alpha,\beta})+cp_{\alpha,d}x^{\beta}+cx^{\alpha}q+pdx^{\beta}+pq\\
  &=cd_{\alpha}c_{\alpha,\beta}x^{\alpha+\beta}+cd_{\alpha}p_{\alpha,\beta}+cp_{\alpha,d}x^{\beta}
  +cx^{\alpha}q+pdx^{\beta}+pq,
\end{align*}
where $0\neq c_{\alpha,\beta}\in R$, $p_{\alpha,\beta}\in A$, $p_{\alpha,\beta}=0$ or
$\deg(p_{\alpha,\beta})<|\alpha+\beta|$. Moreover $cd_{\alpha}c_{\alpha,\beta}\neq 0$ and
$h:=cd_{\alpha}p_{\alpha,\beta}+cp_{\alpha,d}x^{\beta} +cx^{\alpha}q+pdx^{\beta}+pq\in A$ is such
that $h=0$ or $x^{\alpha+\beta}\succ lm(h)$. This means that $fg\neq 0$.
\end{proof}
\begin{theorem}\label{3.1.1a}
Let $A$ be a bijective skew \textit{PBW} extension of a ring $R$. Then
\begin{enumerate}
\item[\rm{(i)}]
\begin{gather}
{\rm lgld}(R)\le {\rm lgld}(A)\le {\rm lgld}(R)+n,\ \ {\rm if}\ \ {\rm lgld}(R)\le \infty.
\end{gather}
If $A$ is quasi-commutative, then
\[
{\rm lgld}(A)={\rm lgld}(R)+n.\] In particular, if $R$ is semisimple, then ${\rm lgld}(A)=n$.
\item[\rm{(ii)}]If $R$ is left
Noetherian, then
\begin{gather} {\rm lKdim}(R)\le {\rm lKdim}(A)\le
{\rm lKdim}(R)+n.
\end{gather}
If $A$ is quasi-commutative, then
\[
{\rm lKdim}(A)={\rm lKdim}(R)+n.
\]
In particular, if $R=\Bbbk$ is a field, then ${\rm lKdim}(A)=n$.
\item[\rm{(iii)}]If $R$ is a left Noetherian domain, then the Goldie dimension
of $A$ is {\rm 1}, that is, ${\rm ludim}\ A=1$.
\end{enumerate}
\begin{proof}
(i) Since $A$ is a filtered ring, then by \cite{McConnell} Corollary 7.6.28
\[
{\rm lgld}(A)\le {\rm lgld}(Gr(A)).
\]
According to Theorems \ref{1.3.2} and \ref{1.3.3}, $Gr(A)$ is isomorphic to an iterated skew
polynomial ring of automorphism type. From \cite{McConnell} Theorem 7.5.3 we get the right
inequalities. By Proposition \ref{1.1.10a}, $A_R$ is free, and hence projective and faithfully
flat. From \cite{McConnell} Theorem 7.2.6 we get the left inequalities. If $A$ is quasicommutative,
from Theorem \ref{1.3.3} we get the equalities. Finally, if $R$ is semisimple, ${\rm
lgld}(R)=0={\rm lgld}(R)$.

(ii) From Corollary \ref{1.3.4} we know that $A$ is a left Noetherian ring. Now, using the fact
that $A_R$ is free, and hence faithfully flat, from \cite{McConnell} Corollary 6.5.3 and Lemma
6.5.6 we get
\begin{gather}
{\rm lKdim}(R)\le {\rm lKdim}(A)\le {\rm lKdim}(Gr(A)).
\end{gather}
Again, by Theorems \ref{1.3.2} and \ref{1.3.3}, we have the first assertion. Now, if $A$ is
quasi-commutative we apply directly Theorem \ref{1.3.3} and \cite{McConnell} Proposition 6.5.4.

(iii) By Corollary \ref{1.3.4} and Proposition \ref{1.1.10}, $A$ is a left Noetherian domain, but
it is well known that every left Noetherian domain is a left Ore domain (\cite{McConnell}). The
assertion follows from the fact that for a domain $S$, ${\rm ludim}\ S=1$ if and only if $S$ is a
left Ore domain (cf. \cite{McConnell}, Example 2.2.11).
\end{proof}
\end{theorem}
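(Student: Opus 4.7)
The plan is to leverage the filtration/gradation machinery built up in Theorems \ref{1.3.2} and \ref{1.3.3} together with Proposition \ref{1.1.10a} and classical dimension results for iterated skew polynomial rings of automorphism type (which are what $Gr(A)$ becomes after the two theorems are applied). The strategy in all three parts is the same: push the question from $A$ down to $Gr(A)$ via the filtration, and then from $Gr(A)$ down to $R$ by peeling off one Ore variable at a time.

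For part (i), I would first invoke Theorem \ref{1.3.2} to realize $A$ as a filtered ring with $Gr(A)$ a bijective quasi-commutative skew $PBW$ extension of $R$, and then use Theorem \ref{1.3.3} to identify $Gr(A)$ with an iterated Ore extension $R[z_1;\theta_1]\cdots[z_n;\theta_n]$ of automorphism type. The standard inequality for filtered rings (McConnell--Robson, Cor.~7.6.18) gives ${\rm lgld}(A)\le {\rm lgld}(Gr(A))$, and the known formula ${\rm lgld}(R[z;\theta])={\rm lgld}(R)+1$ for skew polynomial rings of automorphism type (McConnell--Robson, Thm.~7.5.3), applied $n$ times, gives the upper bound ${\rm lgld}(R)+n$. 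For the lower bound, Proposition \ref{1.1.10a} shows that $A$ is free (hence faithfully flat) as a right $R$-module, so ${\rm lgld}(R)\le {\rm lgld}(A)$ by faithful flatness (McConnell--Robson, Thm.~7.2.6). The quasi-commutative case gives $A\cong Gr(A)$ directly from Theorem \ref{1.3.3}, so both bounds coincide. The semisimple corollary is just the case ${\rm lgld}(R)=0$.

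Part (ii) follows a parallel path with Krull dimension in place of global dimension. Corollary \ref{1.3.4} supplies left noetherianity of $A$, which is needed before one can speak of ${\rm lKdim}$. The corresponding filtered inequality ${\rm lKdim}(A)\le {\rm lKdim}(Gr(A))$ and faithful-flatness lower bound ${\rm lKdim}(R)\le {\rm lKdim}(A)$ come from McConnell--Robson (Cor.~6.5.3, Lem.~6.5.6 and Prop.~6.5.4). Applying Theorem \ref{1.3.3} and the formula ${\rm lKdim}(R[z;\theta])={\rm lKdim}(R)+1$ for automorphism-type Ore extensions over a left noetherian ring iteratively gives ${\rm lKdim}(Gr(A))={\rm lKdim}(R)+n$. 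The quasi-commutative case is again the equality case, and the field case just uses ${\rm lKdim}(\Bbbk)=0$.

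Part (iii) is the shortest: combine Corollary \ref{1.3.4} (so $A$ is left noetherian) with Proposition \ref{1.1.10} (so $A$ is a domain) to conclude $A$ is a left noetherian domain, which is automatically a left Ore domain, and for domains being left Ore is equivalent to having uniform (Goldie) dimension $1$. I do not anticipate a genuine obstacle anywhere; the one subtlety worth checking carefully is that the iterated Ore extension produced by Theorem \ref{1.3.3} really is of \emph{automorphism} type (not merely endomorphism type) in the bijective case, which is precisely part (ii) of that theorem and is what allows the clean formulas for ${\rm lgld}$ and ${\rm lKdim}$ of skew polynomial rings to be applied at every step of the iteration.
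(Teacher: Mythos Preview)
Your proposal is correct and follows essentially the same route as the paper: filtration plus Theorems \ref{1.3.2}--\ref{1.3.3} to reduce to an iterated Ore extension of automorphism type, the standard McConnell--Robson estimates (7.6.28, 7.5.3, 7.2.6 for global dimension; 6.5.3, 6.5.6, 6.5.4 for Krull dimension), Proposition \ref{1.1.10a} for the faithful-flatness lower bound, and Corollary \ref{1.3.4} plus Proposition \ref{1.1.10} for part (iii). The only cosmetic difference is that in the quasi-commutative case you phrase the argument as ``$A\cong Gr(A)$'' whereas the paper simply applies Theorem \ref{1.3.3} directly to $A$; both amount to the same thing.
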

\begin{theorem}
Let $R$ be a ring and $Q^{r,n}_{\textbf{q},\sigma}(R):=R_{\textbf{q},\sigma}[x_1^{\pm 1
},\dots,x_r^{\pm 1}, x_{r+1},\dots,x_n]$. Then,
\begin{enumerate}
\item[\rm (i)]\
\begin{center}
$\text{\rm lgld}(R)\leq \text{\rm lgld}(Q^{r,n}_{\textbf{q},\sigma}(R))\leq \text{\rm lgld}(R)+n$,
if $\text{\rm lgld}(R)<\infty$,
\end{center}
\begin{center}
$\text{\rm lgld}(R_{\textbf{q}}[x_1,\dots,x_n])=\text{\rm lgld}(R)+n$.
\end{center}
If $R$ is semisimple, then $\text{\rm lgld}(R_{\textbf{q}}[x_1,\dots,x_n])=n$ and $\text{\rm
lgld}(R[x^{\pm 1};\sigma])=1$.
\item[\rm (ii)]If $R$ is a left Noetherian, then
\begin{center}
${\rm lKdim}(R)\leq {\rm lKdim}(Q^{r,n}_{\textbf{q},\sigma}(R))\leq {\rm lKdim}(R)+n$,
\end{center}
\begin{center}
$\text{\rm lKdim}(R_{\textbf{q}}[x_1,\dots,x_n])=\text{\rm lKdim}(R)+n$.
\end{center}
If  $R=\Bbbk$ is a field,
\begin{center}
$\text{\rm lKdim}(\Bbbk_{\textbf{q}}[x_1,\dots,x_n])=n$, $\text{\rm
lKdim}(\Bbbk_{\textbf{q}}[x_1^{\pm 1},\dots,x_n^{\pm 1}])\leq n$ and $\text{\rm lKdim}(\Bbbk[x^{\pm
1};\sigma])=1$.
\end{center}
\item[\rm (iii)]If $R$ is a left Noetherian domain, then ${\rm
ludim}(Q^{r,n}_{\textbf{q},\sigma}(R))=1$.
\end{enumerate}
\end{theorem}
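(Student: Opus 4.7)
The plan is to exploit Example \ref{1.4.2}, which realizes $Q^{r,n}_{\textbf{q},\sigma}(R)$ as the Ore localization $S^{-1}A$ of the quasi-commutative bijective skew $PBW$ extension $A:=\sigma(R)\langle x_1,\dots,x_n\rangle$ at the multiplicative set $S=\{rx^{\alpha}\mid r\in R^*,\ x^{\alpha}\in \mathrm{Mon}\{x_1,\dots,x_r\}\}$. This reduces every assertion to applying Theorem \ref{3.1.1a} to $A$ and then tracking how the global, Krull and Goldie dimensions behave under flat localization and under the faithfully flat extension $R\subseteq Q^{r,n}_{\textbf{q},\sigma}(R)$.

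For parts (i) and (ii), I would first invoke Theorem \ref{3.1.1a} for the quasi-commutative bijective ring $A$ to obtain $\mathrm{lgld}(A)=\mathrm{lgld}(R)+n$ and $\mathrm{lKdim}(A)=\mathrm{lKdim}(R)+n$. Ore localization at a denominator set is flat, so the standard results in \cite{McConnell} supply the upper bounds $\mathrm{lgld}(Q^{r,n}_{\textbf{q},\sigma}(R))\le \mathrm{lgld}(A)$ and $\mathrm{lKdim}(Q^{r,n}_{\textbf{q},\sigma}(R))\le \mathrm{lKdim}(A)$. For the lower bounds, item (ii) of Example \ref{1.4.2} exhibits $Q^{r,n}_{\textbf{q},\sigma}(R)$ as a free, hence faithfully flat, left $R$-module, whence \cite{McConnell} Theorems 7.2.6 and Corollary 6.5.3 yield $\mathrm{lgld}(R)\le \mathrm{lgld}(Q^{r,n}_{\textbf{q},\sigma}(R))$ and $\mathrm{lKdim}(R)\le \mathrm{lKdim}(Q^{r,n}_{\textbf{q},\sigma}(R))$. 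When all automorphisms are trivial and $r=0$, the ring $R_{\textbf{q}}[x_1,\dots,x_n]$ is itself a quasi-commutative bijective skew $PBW$ extension of $R$, so the full equalities in Theorem \ref{3.1.1a} apply directly; specialising to $\mathrm{lgld}(R)=0$ and $\mathrm{lKdim}(\Bbbk)=0$ gives the semisimple and field consequences, and the torus bound $\mathrm{lKdim}(\Bbbk_{\textbf{q}}[x_1^{\pm 1},\dots,x_n^{\pm 1}])\le n$ is a direct instance of the upper bound just obtained.

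For part (iii), Proposition \ref{1.1.10} combined with Corollary \ref{1.3.4} shows that $A$ is a left Noetherian domain; Ore localization preserves both left Noetherianity and the domain property, so $Q^{r,n}_{\textbf{q},\sigma}(R)=S^{-1}A$ is a left Noetherian domain, hence a left Ore domain, and the criterion used in the proof of Theorem \ref{3.1.1a}(iii) (that a domain has uniform dimension one precisely when it is left Ore) yields $\mathrm{ludim}(Q^{r,n}_{\textbf{q},\sigma}(R))=1$. The main obstacle in the plan is the two remaining equalities $\mathrm{lgld}(R[x^{\pm 1};\sigma])=1$ and $\mathrm{lKdim}(\Bbbk[x^{\pm 1};\sigma])=1$: the general scheme only supplies the upper bound $\le 1$, and one must separately argue that Ore localization of $R[x;\sigma]$ at the powers of $x$ does not collapse the global (or Krull) dimension further. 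This is classical for skew Laurent extensions of semisimple rings and fields, but it is not a formal consequence of Theorem \ref{3.1.1a}, so it should be invoked explicitly from \cite{McConnell}.
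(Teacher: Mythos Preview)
Your plan is correct and matches the paper's proof almost exactly: both run Theorem \ref{3.1.1a} on the underlying quasi-commutative bijective extension $A$, pass the upper bounds through the Ore localization $S^{-1}A=Q^{r,n}_{\textbf{q},\sigma}(R)$, and get the lower bounds from faithful flatness over $R$. Two small refinements worth noting: for the lower bounds via \cite{McConnell} Theorem 7.2.6 and Corollary 6.5.3 you need $Q^{r,n}_{\textbf{q},\sigma}(R)$ to be faithfully flat as a \emph{right} $R$-module, not just left free as stated in Example \ref{1.4.2}, and the paper supplies this by observing that $rx^{\alpha}=x^{\alpha}(\sigma^{\alpha})^{-1}(r)$ makes the same monomial set a right $R$-basis; and the two equalities you flag as obstacles are settled in the paper by the one-line observation that $R[x^{\pm 1};\sigma]$ (resp.\ $\Bbbk[x^{\pm 1};\sigma]$) is not left Artinian, so its left global (resp.\ Krull) dimension cannot be $0$.
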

\begin{proof}
(i) $Q^{r,n}_{\textbf{q},\sigma}(R)$ is left free over $R$, but $Q^{r,n}_{\textbf{q},\sigma}(R)$ is
right free over $R$, also with basis (\ref{equ1.4.2}):
$rx^{\alpha}=x^{\alpha}(\sigma^{\alpha})^{-1}(r)$ and $x^{\alpha}r=\sigma^{\alpha}(r)x^{\alpha}$;
note that in $Q^{r,n}_{\textbf{q},\sigma}(R)$ holds the identity
$x_i^{-1}r=\sigma_i^{-1}(r)x_i^{-1}$, for $1\leq i\leq r$. Then the result follows from
\cite{McConnell} Theorem 7.2.6 and Corollary 7.4.3, and Theorem \ref{3.1.1a}.

From Theorem \ref{3.1.1a} we get also that $\text{\rm
lgld}(R_{\textbf{q}}[x_1,\dots,x_n])=\text{\rm lgld}(R)+n$, but if $R$ is semisimple, then
$\text{\rm lgld}(R_{\textbf{q}}[x_1,\dots,x_n])=n$.

For $R$ semisimple, $0\leq \text{\rm lgld}(R[x^{\pm 1};\sigma])\leq 1$, but $R[x^{\pm 1};\sigma]$
is not left Artinian, so $\text{\rm lgld}(R[x^{\pm 1};\sigma])=1$. Note that this result coincides
with \cite{McConnell}, Theorem 7.5.3.

(ii) As we saw above $Q^{r,n}_{\textbf{q},\sigma}(R)$ is left and right free over $R$, then the
result follows from Corollary \ref{1.3.4}, \cite{McConnell} Corollary 6.5.3 and Lemma 6.5.3,
\cite{Goodearl} Exercise 15U, and Theorem \ref{3.1.1a}.

If $\Bbbk$ is a field, $0\leq \text{\rm lKdim}(\Bbbk[x^{\pm 1};\sigma])\leq 1$, but $\Bbbk[x^{\pm
1};\sigma]$ is not left Artinian, so $\text{\rm lKdim}(\Bbbk[x^{\pm 1};\sigma])=1$. Compare these
results with \cite{McConnell}, Proposition 6.5.4, \cite{Goodearl}, Corollary 15.20 and
\cite{Artamonov2}, Theorem 3.1.4.

(iii) This follows from \cite{McConnell} Lemma 2.2.12, and Theorem \ref{3.1.1a}.
\end{proof}
\begin{remark}
The estimations of dimensions that we presented above agree with some exactly computations that we
found in the literature, see \cite{Artamonov}, \cite{Artamonov2}, \cite{Bavulakrull},
\cite{Bavula8}, \cite{Bavuladown}, \cite{Fujita}, \cite{Giaquinto}, \cite{Kirkman},
\cite{smithsl2}.
\end{remark}
\section{K-theory}

In this section we compute Quillen's $K$-groups for bijective skew \textit{PBW} extensions, in
particular, we compute Grothendieck, Bass and Milnor's groups for these extensions. For this we use
the following deep result due Quillen (\cite{QuillenK}, see also \cite{Hodges}): \textit{Let $B$ be
a filtered ring with filtration $\{B_p\}_{p\ge 0}$ such that $B_0$ and $Gr(B)$ are left Noetherian
left regular rings. Then the functor $B\otimes_{B_0}\_\_$ induces isomorphisms
\[
K_i(B)\cong K_i(B_0),\ \ \ {\rm for\ all}\ \ i\ge 0.
\]}
\begin{theorem}\label{K_iforskpw}
Let $R$ be a left Noetherian left regular ring. If $A$ is a bijective skew \textit{PBW} extension
of $R$, then $K_i(A)\cong K_i(R)$ for all $i\ge 0$.
\begin{proof}
Theorem \ref{1.3.2} describes explicitly the filtration of $A$ with $F_0=R$. The proof of Corollary
\ref{1.3.4} shows that $Gr(A)$ is left Noetherian and the proof of Corollary \ref{3.2.1a}
guarantees that $Gr(A)$ is left regular. The assertion follows from Quillen's result.
\end{proof}
\end{theorem}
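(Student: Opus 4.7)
The plan is to apply the Quillen theorem quoted just above the statement directly to $A$ with the filtration constructed in Theorem \ref{1.3.2}. Quillen's result requires three ingredients: a filtration $\{B_p\}_{p\ge 0}$ on the ring, the base $B_0$ left Noetherian and left regular, and the associated graded $Gr(B)$ left Noetherian and left regular. Once those are in place the functor $B\otimes_{B_0}-$ induces $K_i(B)\cong K_i(B_0)$ for all $i\ge 0$, which is exactly the conclusion we need with $B=A$ and $B_0=R$.

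First I would take the filtration $F_m$ defined in \eqref{eq1.3.1a}. By Theorem \ref{1.3.2} this is an ascending exhaustive filtration of $A$ with $F_0=R$, so the base of the filtration is $R$, which is left Noetherian and left regular by hypothesis. This takes care of the $B_0$ condition in Quillen's theorem with no further work.

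Next I would identify $Gr(A)$. By Theorem \ref{1.3.2}, because $A$ is bijective, $Gr(A)$ is a quasi-commutative \emph{bijective} skew $PBW$ extension of $R$. Then Corollary \ref{1.3.4} applied to $Gr(A)$ (with the same base $R$, which is still left Noetherian) yields that $Gr(A)$ is left Noetherian, and Corollary \ref{3.2.1a} applied to $Gr(A)$ (with $R$ left Noetherian and left regular) yields that $Gr(A)$ is left regular. So both hypotheses on $Gr(B)$ are satisfied.

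At that point the cited Quillen theorem applies to $B:=A$ with $B_0=R$, giving $K_i(A)\cong K_i(R)$ for every $i\ge 0$. There is no real obstacle here: the theorem is essentially a direct assembly of Theorem \ref{1.3.2}, Corollary \ref{1.3.4}, Corollary \ref{3.2.1a}, and Quillen's theorem, and the only thing to check carefully is that the filtration $\{F_m\}_{m\ge 0}$ of Theorem \ref{1.3.2} satisfies the technical hypotheses under which Quillen's statement is usually formulated (ascending, exhaustive, $F_0\subseteq F_1\subseteq\cdots$ with $F_pF_q\subseteq F_{p+q}$ and $\bigcup F_p=A$); these are all immediate from the definition of $F_m$ in \eqref{eq1.3.1a} and the multiplicative behavior of degrees in a skew $PBW$ extension, so no additional calculation beyond quoting \eqref{eq1.3.1a} is needed.
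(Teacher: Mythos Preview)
Your proposal is correct and follows essentially the same route as the paper: use the filtration of Theorem \ref{1.3.2} with $F_0=R$, verify that $Gr(A)$ is left Noetherian and left regular via Corollaries \ref{1.3.4} and \ref{3.2.1a}, and then invoke Quillen's theorem. The only cosmetic difference is that the paper cites ``the proof of'' those corollaries (where the Noetherianity and regularity of $Gr(A)$ appear as intermediate steps), whereas you apply the corollaries directly to $Gr(A)$ itself, which is legitimate since Theorem \ref{1.3.2} guarantees $Gr(A)$ is again a bijective skew $PBW$ extension of $R$.
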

According to Theorem \ref{K_iforskpw}, to effective calculate Quillen's $K$-groups for bijective
skew \textit{PBW} extensions we have to compute $K_i(R)$, but for many of remarkable examples of
bijective skew \textit{PBW} extensions the ring $R$ of coefficients is an iterated Laurent
polynomial ring. Thus, Theorem \ref{K_iforskpw} can be complemented with the next proposition that
can be used to effective compute Quillen's $K$-groups for the examples considered in Section
\ref{Examplestrad}.
\begin{proposition}\label{K_1loc}
Let $B$ be a left Noetherian left regular ring. Then
\begin{equation}\label{equ2.1}
K_m(B[x^{\pm 1}_1,\dotsc,x^{\pm 1}_n])\cong \bigoplus_{j=0}^m [K_j(B)]^{_n{\rm C}_{m-j}},\ \ \ {\rm
where}\ \ \ _n{\rm C}_{m-j}:=\dbinom{n}{m-j}.
\end{equation}
In particular, Grothendieck, Bass and Milnor's groups of $B[x^{\pm 1}_1,\dotsc,x^{\pm 1}_n]$ are
given by
\begin{enumerate}
\item [\rm (i)] $K_0(B[x^{\pm 1}_1,\dotsc,x^{\pm 1}_n])\cong K_0(B)${\rm ;}
\item [\rm (ii)] $K_1(B[x^{\pm 1}_1,\dotsc,x^{\pm 1}_n])\cong [K_0(B)]^{n}\oplus K_1(B)${\rm ;}
\item [\rm (iii)] $K_2(B[x^{\pm 1}_1,\dotsc,x^{\pm 1}_n])\cong [K_0(B)]^{\frac{n(n-1)}{2}}\oplus [K_1(B)]^{n}\oplus K_2(B)${\rm .}
\end{enumerate}
\end{proposition}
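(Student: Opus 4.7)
The plan is to reduce this to the Fundamental Theorem of algebraic $K$-theory (due to Quillen, generalizing Bass's original theorem for $K_0$ and $K_1$): for any left Noetherian left regular ring $S$, the Laurent polynomial ring $S[x^{\pm 1}]$ satisfies
\begin{equation*}
K_m(S[x^{\pm 1}])\cong K_m(S)\oplus K_{m-1}(S),\quad m\ge 0,
\end{equation*}
with the convention $K_{-1}(S)=0$. Moreover, if $S$ is left Noetherian and left regular, then so is $S[x^{\pm 1}]$ (the polynomial ring $S[x]$ is Noetherian regular by Hilbert's theorem plus preservation of finite global dimension under polynomial extension, and localizing at the Ore set $\{x^k\}$ preserves both properties). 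Applied inductively, this shows that each intermediate ring $B_k:=B[x_1^{\pm 1},\dotsc,x_k^{\pm 1}]$ appearing in the tower is left Noetherian and left regular, so the fundamental theorem is legitimately available at every stage.

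Next I induct on $n$. The base case $n=0$ is trivial (the binomial $\binom{0}{m-j}$ vanishes unless $j=m$). For the inductive step, write $B_n=B_{n-1}[x_n^{\pm 1}]$ and apply the fundamental theorem together with the inductive hypothesis for $B_{n-1}$:
\begin{align*}
K_m(B_n)&\cong K_m(B_{n-1})\oplus K_{m-1}(B_{n-1})\\
&\cong \bigoplus_{j=0}^m [K_j(B)]^{\binom{n-1}{m-j}}\oplus \bigoplus_{j=0}^{m-1}[K_j(B)]^{\binom{n-1}{m-1-j}}\\
&\cong \bigoplus_{j=0}^m [K_j(B)]^{\binom{n-1}{m-j}+\binom{n-1}{m-j-1}}\\
&\cong \bigoplus_{j=0}^m [K_j(B)]^{\binom{n}{m-j}},
\end{align*}
where the last step is Pascal's identity (and the convention $\binom{n-1}{-1}=0$ absorbs the boundary term at $j=m$). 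This establishes the general formula (\ref{equ2.1}).

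The three displayed specializations (i), (ii), (iii) then come out by direct substitution of $m=0,1,2$ into (\ref{equ2.1}), using $\binom{n}{0}=1$, $\binom{n}{1}=n$, and $\binom{n}{2}=\tfrac{n(n-1)}{2}$. The only step that is not entirely formal is verifying that each $B_k$ is left Noetherian and left regular so that Quillen's theorem applies at every stage; this is the mildly technical point, but it is standard and does not require any machinery beyond what has already been invoked for $A$ itself (compare the proofs of Corollaries \ref{1.3.4} and \ref{3.2.1a}). Everything else is a clean induction combined with Pascal's rule.
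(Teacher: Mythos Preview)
Your proof is correct and follows essentially the same route as the paper: both arguments reduce to Quillen's identity $K_i(S[x^{\pm 1}])\cong K_i(S)\oplus K_{i-1}(S)$ for left Noetherian left regular $S$ (the paper cites \cite{QuillenK} Theorem 8 and \cite{Srinivas} Corollary 5.5) and then proceed by induction. You spell out the Pascal-identity bookkeeping and the verification that each intermediate ring $B_k$ remains left Noetherian and left regular, points the paper leaves implicit, but there is no substantive difference in strategy.
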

\begin{proof}
With the conditions on $B$, the proof is by induction on $m$ and $n$ and using the following well
known identity (\cite{QuillenK} Theorem 8 or \cite{Srinivas} Corollary 5.5):
\begin{equation}\label{equ2.2}
K_i(B[x^{\pm 1}])\cong K_i(B)\oplus K_{i-1}(B), \ \ {\rm where} \ \ {K_{-1}(B):=0}.
\end{equation}
\end{proof}
Next we compute the Quillen's $K$-groups for skew quantum polynomials, we will see that this
computations generalize the results presented in \cite{Artamonov2}.
\begin{lemma}\label{3.3}
If $R$ is a left Noetherian left regular ring, then
\begin{equation}\label{Kcomputimpor}
K_i(R_{\textbf{q},\sigma}[x_1^{\pm 1},\dotsc,x_{r}^{\pm 1}, x_{r+1},\dotsc, x_n])\cong
K_i(R_{\textbf{q},\sigma}[x_1^{\pm 1},\dotsc,x_r^{\pm 1}]), i\geq 0.
\end{equation}
\end{lemma}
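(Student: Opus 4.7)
The plan is to reduce the lemma to Theorem \ref{K_iforskpw} by viewing the full ring of skew quantum polynomials as a bijective skew $PBW$ extension \emph{over the torus piece}. Set $C := R_{\textbf{q},\sigma}[x_1^{\pm 1},\dotsc,x_{r}^{\pm 1}, x_{r+1},\dotsc, x_n]$ and $B := R_{\textbf{q},\sigma}[x_1^{\pm 1},\dotsc,x_r^{\pm 1}]$. My goal is to show that $C$ is a quasi-commutative bijective skew $PBW$ extension of $B$ in the $n-r$ variables $x_{r+1},\dotsc,x_n$; once this is established, Remark \ref{3.3a} tells us that $B$ is left Noetherian and left regular, so Theorem \ref{K_iforskpw} applied to the extension $B \subseteq C$ gives $K_i(C) \cong K_i(B)$ for all $i \ge 0$.

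The main work is verifying the skew $PBW$ axioms of Definition \ref{gpbwextension} for $B \subseteq C$. For the free-module condition, I would regroup the basis (\ref{equ1.4.2}) of $C$ over $R$ by separating the first $r$ exponents from the last $n-r$: every element of $C$ has a unique expression $\sum_\alpha b_\alpha\, x_{r+1}^{\alpha_{r+1}}\cdots x_n^{\alpha_n}$ with $b_\alpha \in B$, because the $R$-basis of $B$ consists precisely of the monomials in $x_1^{\pm 1},\dots,x_r^{\pm 1}$. For condition (iii), I need the key computation that for $i \le r < k$ one has $x_k x_i^{-1} = \sigma_i^{-1}(q_{ki})\, x_i^{-1} x_k$ with $\sigma_i^{-1}(q_{ki})\in R^{*}$, which follows from $x_k x_i = q_{ik} x_i x_k$ together with $q_{ik}q_{ki}=1$ and the fact that $\sigma_i$ is an automorphism (so $x_i^{-1} r = \sigma_i^{-1}(r) x_i^{-1}$). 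Iterating this pushes $x_k$ past any monomial $x_1^{\alpha_1}\cdots x_r^{\alpha_r}$ at the cost of a unit in $R^{*}$, and combining with $x_k r = \sigma_k(r)x_k$ on $R$ yields an endomorphism $\widetilde\sigma_k : B \to B$ with $x_k b = \widetilde\sigma_k(b)\, x_k$; since the unit factors and $\sigma_k$ preserve nonzero elements, $\widetilde\sigma_k(b)\neq 0$ whenever $b\neq 0$. Condition (iv) is immediate: $x_l x_k = q_{kl} x_k x_l$ for $r < k < l \le n$ with $q_{kl}\in R^{*}\subseteq B^{*}$.

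Bijectivity of the extension is then visible: each $\widetilde\sigma_k$ is bijective (a straightforward inverse computation as in Theorem \ref{1.3.3}(ii), since $\sigma_k$ is bijective on $R$ and the twisting factors lie in $R^{*}$), and the constants $q_{kl}$ are invertible. Thus $C \cong \sigma(B)\langle x_{r+1},\dotsc,x_n\rangle$ is a quasi-commutative bijective skew $PBW$ extension of $B$. Since $B$ is left Noetherian and left regular by Remark \ref{3.3a}, Theorem \ref{K_iforskpw} applies and yields the isomorphism (\ref{Kcomputimpor}).

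The only delicate point I anticipate is bookkeeping in the verification that $x_k b$ lands in $B\cdot x_k$ when pushing $x_k$ past a monomial containing negative exponents; everything else reduces to the fact that the commutation constants $q_{ij}$ are units of $R$ lying in the fixed subring of all the $\sigma_l$ needed (or rather, that their images under those automorphisms remain units), so the scalar $\widetilde\sigma_k(b)$ is unambiguously an element of $B$. Once that is cleanly recorded, the lemma is a direct corollary of the Quillen-based Theorem \ref{K_iforskpw}.
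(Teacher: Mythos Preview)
Your proposal is correct and follows exactly the same route as the paper: view the full ring as a quasi-commutative bijective skew $PBW$ extension $\sigma(T)\langle x_{r+1},\dots,x_n\rangle$ of the torus $T=R_{\textbf{q},\sigma}[x_1^{\pm 1},\dotsc,x_r^{\pm 1}]$, observe that $T$ is left Noetherian and left regular by Remark~\ref{3.3a}, and then apply Theorem~\ref{K_iforskpw}. The only difference is that you spell out the verification of the axioms in Definition~\ref{gpbwextension} (free basis, the commutation $x_k b=\widetilde\sigma_k(b)x_k$, bijectivity), whereas the paper simply asserts the isomorphism $C\cong\sigma(T)\langle x_{r+1},\dots,x_n\rangle$ without details.
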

\begin{proof}
Note that $R_{\textbf{q},\sigma}[x_1^{\pm 1},\dotsc,x_{r}^{\pm 1}, x_{r+1},\dotsc,x_n]$ is as a
quasi-commutative bijective skew $PBW$ extension of the $r$-multiparametric skew quantum torus over
$R$, i.e.,
\begin{center}
$R_{\textbf{q},\sigma}[x_1^{\pm 1},\dotsc,x_{r}^{\pm 1}, x_{r+1},\dotsc,x_n]\cong \sigma(T)\langle
x_{r+1},\dots,x_{n}\rangle$, with $T:=R_{\textbf{q},\sigma}[x_1^{\pm 1},\dotsc,x_{r}^{\pm 1}]$.
\end{center}
Then the result follows from Theorem \ref{K_iforskpw} since $T$ is left Noetherian left regular
(see Remark \ref{3.3a}).
\end{proof}
This lemma says that computing Quillen's $K$-groups is reduced to compute
$K_i(R_{\textbf{q},\sigma}[x_1^{\pm 1},\dotsc,x_r^{\pm 1}])$ for $i\ge 0$, but note that
\[
R_{\textbf{q},\sigma}[x_1^{\pm 1},\dotsc,x_r^{\pm 1}] \cong R[x_1^{\pm 1};\sigma_1]\dotsb
[x_{r}^{\pm 1};\sigma_{r}],\ \
x_jr=\sigma_j(r)x_j,\ \ 1\le j\le r, \ \sigma_j(x_i):=q_{ij}x_i,\\
\ 1\le i<j\le r,
\]
so
\begin{equation}\label{15ene}
K_i(R_{\textbf{q},\sigma}[x_1^{\pm 1},\dotsc,x_r^{\pm 1}])\cong K_i(R[x_1^{\pm 1};\sigma_1]\dotsb
[x_{r}^{\pm 1};\sigma_{r}]),\ \ \ i\ge 0.
\end{equation}
In other words, to calculate Quillen's $K$-groups we need to compute $K_i(R[x_1^{\pm
1};\sigma_1]\dotsb [x_{r}^{\pm 1};\sigma_{r}]),\ i\ge 0$. With this purpose in mind, we present the
following proposition which is a generalization of (\ref{equ2.2}) for skew Laurent polynomial
rings.
\begin{proposition}\label{generFtrr}
Let $B$ be a left Noetherian left regular ring. If $\sigma$ is an automorphism of $B$ that acts
trivially on the $K$-theory of $B$, then
\[
K_i(B[x^{\pm 1};\sigma])\cong K_i(B)\oplus K_{i-1}(B),\ \ {\rm where}\ \ K_{-1}(B):=0.
\]
\begin{proof}
The idea is to apply the following long exact sequence of $K$-groups (see \cite{Yao} Corollary
2.2.):
\begin{equation}\label{Quillenexact}
\dotsb \to K_i(B)\xrightarrow{1-\sigma_{*}} {K_i(B)} \to K_i(B[x^{\pm 1};\sigma])\to K_{i-1}(B)\to
\dotsb.
\end{equation}
The assumption $\sigma$ acts trivially on the $K$-theory of $B$ implies that from the long exact
sequence (\ref{Quillenexact}) we can extract a short exact sequence
\[
0\to K_i(B)\to K_i(B[x^{\pm 1};\sigma])\to K_{i-1}(B)\to 0
\]
(c.f. \cite{QuillenK}, \cite{Srinivas} or \cite{Yao}), and hence $K_i(B[x^{\pm 1};\sigma])\cong
K_i(B)\oplus K_{i-1}(B)$.
\end{proof}
\end{proposition}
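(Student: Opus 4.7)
The strategy is to invoke Yao's long exact sequence for the $K$-theory of a skew Laurent polynomial ring, which is the tool already cited in the paper. For a left Noetherian left regular ring $B$ equipped with a ring automorphism $\sigma$, \cite{Yao} Corollary~2.2 furnishes the long exact sequence
\[
\cdots \to K_i(B) \xrightarrow{1-\sigma_*} K_i(B) \to K_i(B[x^{\pm 1};\sigma]) \to K_{i-1}(B) \xrightarrow{1-\sigma_*} K_{i-1}(B) \to \cdots
\]
which terminates using the convention $K_{-1}(B):=0$. Noetherian regularity of $B$ is essential so that the underlying localization sequence of Quillen-exact categories is available and the connecting maps are defined in this clean form; I would check once that these hypotheses lift to $B[x;\sigma]$ and $B[x^{\pm 1};\sigma]$ (standard twisted-Hilbert-basis plus a regularity argument of Quillen type).

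With this tool in hand, the hypothesis that $\sigma$ acts trivially on the $K$-theory of $B$ translates into $\sigma_* = \mathrm{id}$ on every $K_i(B)$, so every occurrence of $1-\sigma_*$ in the sequence is identically zero. Substituting this observation cuts the long exact sequence into short exact sequences
\[
0 \to K_i(B) \longrightarrow K_i(B[x^{\pm 1};\sigma]) \longrightarrow K_{i-1}(B) \to 0
\]
for each $i \ge 0$. The leftmost map is induced by the ring inclusion $B \hookrightarrow B[x^{\pm 1};\sigma]$, and the special case $i=0$ recovers the boundary isomorphism $K_0(B)\cong K_0(B[x^{\pm 1};\sigma])$ since $K_{-1}(B)=0$.

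The remaining and most delicate point, which I expect to be the main obstacle, is to verify that each such short exact sequence splits, yielding the direct-sum decomposition $K_i(B[x^{\pm 1};\sigma])\cong K_i(B)\oplus K_{i-1}(B)$. A purely algebraic splitting of abelian groups is not automatic from vanishing of the connecting map, so the splitting must be drawn from the homotopy-theoretic content of the Grayson-Yao construction: the cofiber sequence of $K$-theory spectra underlying the long exact sequence splits once $1-\sigma_*$ becomes null, giving naturally split short exact sequences on homotopy groups. Equivalently, one can exhibit a Bass--Heller--Swan-type section $K_{i-1}(B)\to K_i(B[x^{\pm 1};\sigma])$ built from multiplication by $x$, pairing the usual splitting for the untwisted Laurent ring with the triviality of $\sigma_*$. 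I would import this splitting from \cite{QuillenK}, \cite{Srinivas}, or \cite{Yao} rather than reprove it, concluding the isomorphism stated in the proposition.
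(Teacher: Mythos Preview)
Your proposal is correct and follows essentially the same approach as the paper: invoke Yao's long exact sequence, use the triviality of $\sigma_*$ to extract short exact sequences, and then appeal to \cite{QuillenK}, \cite{Srinivas}, or \cite{Yao} for the splitting. If anything, you are more careful than the paper in flagging that the splitting is not automatic from the short exact sequence alone and must be imported from the cited references.
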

\begin{proposition}\label{Kprel}
Suppose that $R$ is left Noetherian left regular and that $\sigma_j,\ 1\le j\le r-1$, acts
trivially on the $K$-theory of the $j-1$-multiparametric skew quantum torus $R[x_1^{\pm
1};\sigma_1]\dotsb [x_{j-1}^{\pm 1};\sigma_{j-1}]$. Then
\begin{equation}
K_m(R[x_1^{\pm 1};\sigma_1]\dotsb [x_{r}^{\pm 1};\sigma_{r}])\cong \bigoplus_{j=0}^m
[K_j(R)]^{_r{\rm C}_{m-j}}.
\end{equation}
\begin{proof}
Similar to the proof of Proposition \ref{K_1loc} and using Proposition \ref{generFtrr}.
\end{proof}
\end{proposition}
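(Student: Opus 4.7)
The plan is to induct on $r$, peeling off one skew Laurent variable at a time and applying Proposition \ref{generFtrr} together with Pascal's rule to collect the resulting binomial coefficients. Write $T_j := R[x_1^{\pm 1};\sigma_1]\dotsb [x_j^{\pm 1};\sigma_j]$ with $T_0:=R$, so the goal is
\[
K_m(T_r)\cong \bigoplus_{j=0}^m [K_j(R)]^{_r{\rm C}_{m-j}}.
\]
For the base case $r=0$ the right hand side reduces to $K_m(R)$ since $_0{\rm C}_{m-j}$ equals $1$ when $j=m$ and $0$ otherwise.

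For the inductive step, I would first verify that $T_{r-1}$ is left Noetherian and left regular so that Proposition \ref{generFtrr} applies to the extension $T_r=T_{r-1}[x_r^{\pm 1};\sigma_r]$; this follows by iterating Remark \ref{3.3a}, or equivalently by viewing each $T_{j-1}$ as the $(j-1)$-multiparametric skew quantum torus over $R$ and invoking Corollaries \ref{1.3.4} and \ref{3.2.1a} together with the localization description of Example \ref{1.4.2}. Since $\sigma_r$ is an automorphism of $T_{r-1}$ acting trivially on $K_*(T_{r-1})$ by hypothesis, Proposition \ref{generFtrr} yields
\[
K_m(T_r)\cong K_m(T_{r-1})\oplus K_{m-1}(T_{r-1}).
\]
Applying the induction hypothesis to both summands and collecting terms by the index $j$ gives
\[
K_m(T_r)\cong \bigoplus_{j=0}^m [K_j(R)]^{_{r-1}{\rm C}_{m-j}+{_{r-1}{\rm C}_{m-1-j}}},
\]
where by convention $_{r-1}{\rm C}_{-1}=0$. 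Pascal's identity $_{r-1}{\rm C}_{m-j}+{_{r-1}{\rm C}_{m-1-j}}={_{r}{\rm C}_{m-j}}$ then closes the induction.

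The main obstacle is simply verifying that the hypotheses of Proposition \ref{generFtrr} hold at every stage of the induction: each intermediate ring $T_0,T_1,\dotsc,T_{r-1}$ must be left Noetherian and left regular, which we obtain from the structural results of the previous sections applied inductively, and the relevant $\sigma_j$ must act trivially on $K$-theory, which is part of the hypothesis. No deeper new input is required beyond Pascal's rule and careful bookkeeping; in spirit this is the skew Laurent analogue of the argument already used to prove Proposition \ref{K_1loc}.
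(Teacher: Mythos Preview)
Your argument is exactly what the paper intends: you carry out the induction on the number of skew Laurent variables, invoking Proposition~\ref{generFtrr} at each stage and Pascal's identity to combine the exponents, which is precisely the ``similar to Proposition~\ref{K_1loc} using Proposition~\ref{generFtrr}'' indicated in the paper. One small caveat is that the hypothesis as literally stated only covers $\sigma_j$ for $1\le j\le r-1$, whereas your inductive step (and indeed any proof along these lines) needs $\sigma_r$ to act trivially on $K_*(T_{r-1})$ as well; this is almost certainly an off-by-one slip in the statement rather than a flaw in your reasoning.
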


\begin{proposition}\label{sqtorus}
If $R$ is left Noetherian left regular, then for the $r$-multiparametric skew quantum torus
$R_{{\rm q},\sigma}[x_1^{\pm 1},\dotsc,x_{r}^{\pm 1}]$ we have
\[
K_1(R_{{\rm q},\sigma}[x_1^{\pm 1},\dotsc,x_{r}^{\pm 1}])\cong [K_0(R)]^r\oplus K_1(R).
\]
\begin{proof}
Similar to the proof of Proposition \ref{K_1loc} and using the fact that if $B$ is a left
Noetherian left regular ring with an automorphism $\sigma$ and $i:B\to B[x^{\pm 1};\sigma]$ is the
natural embedding, then the sequence
\[
1\to K_1(B)\xrightarrow{i_{*}} K_1(B[x^{\pm 1};\sigma])\xrightarrow{\partial} K_0(B)\to 0
\]
is exact (see \cite{QuillenK}, p. 122 and \cite{Bass}, Ch. 9. Theorem 6.3).
\end{proof}
\end{proposition}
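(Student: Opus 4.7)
The plan is to induct on $r$, mirroring the proof of Proposition \ref{K_1loc} but using the skew version of the Bass--Heller--Swan short exact sequence in place of the untwisted decomposition (\ref{equ2.2}).

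Setting up the induction, I first invoke the isomorphism from equation (\ref{15ene}) to rewrite
\[
T_r:=R_{\textbf{q},\sigma}[x_1^{\pm 1},\dotsc,x_{r}^{\pm 1}]\cong R[x_1^{\pm 1};\sigma_1]\cdots[x_r^{\pm 1};\sigma_r],
\]
with $\sigma_j$ extending the given automorphism of $R$ via $\sigma_j(x_i):=q_{ij}x_i$ for $i<j$. By iterating the fact that skew Laurent extensions of left Noetherian left regular rings are again left Noetherian left regular, each intermediate ring $T_j$ inherits both properties, so the cited exact sequence applies at every step of the induction.

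For the base case $r=1$, I apply the cited sequence
\[
1\to K_1(R)\xrightarrow{i_*} K_1(T_1)\xrightarrow{\partial} K_0(R)\to 0,
\]
which splits via the natural map $K_0(R)\to K_1(T_1)$ sending $[P]$ to the class of the automorphism induced by the unit $x_1$ on $T_1\otimes_R P$, yielding $K_1(T_1)\cong K_0(R)\oplus K_1(R)$. For the inductive step, I write $T_r=T_{r-1}[x_r^{\pm 1};\sigma_r]$ and apply the same sequence with $B=T_{r-1}$:
\[
K_1(T_r)\cong K_0(T_{r-1})\oplus K_1(T_{r-1}).
\]
Feeding in the induction hypothesis $K_1(T_{r-1})\cong [K_0(R)]^{r-1}\oplus K_1(R)$ together with the parallel fact $K_0(T_{r-1})\cong K_0(R)$ --- obtained by iterating the degree-zero statement $K_0(B[x^{\pm 1};\sigma])\cong K_0(B)$, valid for left Noetherian left regular $B$ since $K_{-1}$ vanishes for such rings --- gives $K_1(T_r)\cong [K_0(R)]^r\oplus K_1(R)$, completing the induction.

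The main obstacle is not the combinatorics of the induction, which is essentially identical to that in Proposition \ref{K_1loc}, but rather the careful appeal to the cited Bass--Heller--Swan result: both the exactness and the splitting of the short exact sequence for $K_1$, as well as the parallel preservation of $K_0$ under each skew Laurent extension, rely on the full regularity hypothesis and are the delicate points justified via \cite{QuillenK} and \cite{Bass}. Once these ingredients are in hand, the remainder of the argument is formal.
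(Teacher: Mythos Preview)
Your proof is correct and follows essentially the same approach as the paper: induction on $r$ via the iterated skew Laurent presentation, invoking at each step the short exact sequence $1\to K_1(B)\to K_1(B[x^{\pm 1};\sigma])\to K_0(B)\to 0$ from \cite{QuillenK} and \cite{Bass}. You are simply more explicit than the paper about two points it leaves to the reader, namely the splitting of that sequence and the auxiliary identification $K_0(T_{r-1})\cong K_0(R)$ needed to close the induction.
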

\begin{corollary}\label{ArtBassQ}
If $R$ is left Noetherian left regular then for the skew quantum polynomial we have
\[
K_1(R_{{\rm q},\sigma}[x_1^{\pm 1}, \dotsc, x_r^{\pm 1},x_{r+1},\dotsc,x_n])\cong [K_0(R)]^r\oplus
K_1(R).
\]
\begin{proof}
Follows from Lemma \ref{3.3} and Proposition \ref{sqtorus}.
\end{proof}
\end{corollary}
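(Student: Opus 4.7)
The plan is to combine the two previous results directly: first collapse the non-invertible variables $x_{r+1},\dots,x_n$ via Lemma \ref{3.3}, then apply Proposition \ref{sqtorus} to the resulting skew quantum torus to get the desired direct sum decomposition.

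More precisely, I would begin by noting that $R$ is left Noetherian and left regular by hypothesis, so the hypotheses of Lemma \ref{3.3} are satisfied. Specializing that lemma to $i=1$ gives the isomorphism
\[
K_1(R_{\mathbf{q},\sigma}[x_1^{\pm 1},\dotsc,x_{r}^{\pm 1}, x_{r+1},\dotsc, x_n])\cong K_1(R_{\mathbf{q},\sigma}[x_1^{\pm 1},\dotsc,x_r^{\pm 1}]).
\]
The content of Lemma \ref{3.3} is that the ring on the left is a quasi-commutative bijective skew $PBW$ extension $\sigma(T)\langle x_{r+1},\dots,x_n\rangle$ of the $r$-multiparametric skew quantum torus $T:=R_{\mathbf{q},\sigma}[x_1^{\pm 1},\dotsc,x_r^{\pm 1}]$, and Theorem \ref{K_iforskpw} (Quillen) collapses it to $T$ at the level of $K$-theory, provided $T$ itself is left Noetherian left regular, which is Remark \ref{3.3a}.

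Next, I would apply Proposition \ref{sqtorus} to $T$. Since $R$ is left Noetherian and left regular, that proposition yields
\[
K_1(R_{\mathbf{q},\sigma}[x_1^{\pm 1},\dotsc,x_{r}^{\pm 1}])\cong [K_0(R)]^r\oplus K_1(R).
\]
Chaining the two isomorphisms gives precisely the statement of the corollary.

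There is essentially no obstacle here, since both of the ingredients have already been established: Lemma \ref{3.3} eliminates the polynomial (non-invertible) variables and Proposition \ref{sqtorus} handles the Laurent (invertible) variables via the Quillen localization sequence. The only small sanity check required is that the intermediate ring $T$ satisfies the hypotheses of both results, i.e.\ that it is left Noetherian and left regular, which is guaranteed by Remark \ref{3.3a} (noetherianity and regularity are preserved under the construction of $T$ from $R$ since $T$ is a localization of a bijective skew $PBW$ extension of $R$).
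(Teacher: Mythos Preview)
Your proposal is correct and follows essentially the same approach as the paper: the paper's proof is the single line ``Follows from Lemma \ref{3.3} and Proposition \ref{sqtorus},'' and you have simply unpacked that by specializing Lemma \ref{3.3} to $i=1$ and then invoking Proposition \ref{sqtorus}, with the auxiliary observation (Remark \ref{3.3a}) that the intermediate torus $T$ is left Noetherian and left regular.
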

Now we can establish the main result in this section.
\begin{theorem}\label{Ktheoryspbw}
Under the same conditions of Proposition \ref{Kprel}, we have
\[
K_m(R_{{\rm q},\sigma}[x_1^{\pm 1}, \dotsc, x_r^{\pm 1},x_{r+1},\dotsc,x_n])\cong \bigoplus_{j=0}^m
[K_j(R)]^{_r{\rm C}_{m-j}}.
\]
In particular,
\begin{align*}
K_0(R_{{\rm q},\sigma}[x_1^{\pm 1}, \dotsc, x_r^{\pm 1},x_{r+1},\dotsc,x_n])& \cong K_0(R);\\
K_1(R_{{\rm q},\sigma}[x_1^{\pm 1}, \dotsc, x_r^{\pm 1},x_{r+1},\dotsc,x_n])& \cong [K_0(R)]^r\oplus K_1(R);\\
K_2(R_{{\rm q},\sigma}[x_1^{\pm 1}, \dotsc, x_r^{\pm 1},x_{r+1},\dotsc,x_n])& \cong
[K_0(R)]^{\frac{r(r-1)}{2}}\oplus [K_1(R)]^r\oplus K_2(R).
\end{align*}
\begin{proof}
Follows from Lemma \ref{3.3} and Proposition \ref{Kprel}.
\end{proof}
\end{theorem}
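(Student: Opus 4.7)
The plan is to combine the two preceding results in a straightforward chain of isomorphisms, with essentially no new content beyond bookkeeping. First, I would invoke Lemma \ref{3.3}, which states that the inclusion of the $r$-multiparametric skew quantum torus into the full skew quantum polynomial ring induces isomorphisms
\[
K_i(R_{\textbf{q},\sigma}[x_1^{\pm 1},\dotsc,x_r^{\pm 1},x_{r+1},\dotsc,x_n])\cong K_i(R_{\textbf{q},\sigma}[x_1^{\pm 1},\dotsc,x_r^{\pm 1}])
\]
for all $i\ge 0$. This is precisely the tool that removes the non-invertible variables from the computation, reducing the problem to a computation of $K$-theory of the skew quantum torus.

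Next, I would use the identification noted in equation (\ref{15ene}), namely
\[
R_{\textbf{q},\sigma}[x_1^{\pm 1},\dotsc,x_r^{\pm 1}]\cong R[x_1^{\pm 1};\sigma_1]\dotsb[x_r^{\pm 1};\sigma_r],
\]
which turns the torus into an iterated skew Laurent polynomial ring of automorphism type. At this point Proposition \ref{Kprel} applies directly, giving
\[
K_m\bigl(R[x_1^{\pm 1};\sigma_1]\dotsb[x_r^{\pm 1};\sigma_r]\bigr)\cong \bigoplus_{j=0}^m [K_j(R)]^{{}_r{\rm C}_{m-j}}
\]
under the assumed triviality of the $\sigma_j$ actions on $K$-theory. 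Splicing the three isomorphisms together yields the main formula.

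Finally, the special cases $m=0,1,2$ follow by expanding the binomial coefficients ${}_r{\rm C}_{m-j}=\binom{r}{m-j}$: for $m=0$ only the $j=0$ term with ${}_r{\rm C}_0=1$ survives; for $m=1$ one gets $[K_0(R)]^r\oplus K_1(R)$ from ${}_r{\rm C}_1=r$ and ${}_r{\rm C}_0=1$; and for $m=2$ one obtains $[K_0(R)]^{r(r-1)/2}\oplus[K_1(R)]^r\oplus K_2(R)$ from ${}_r{\rm C}_2=r(r-1)/2$. There is no genuine obstacle here since the real work was already done in setting up Proposition \ref{Kprel} (which repackages Quillen's long exact sequence into a split short one under the triviality hypothesis) and in Lemma \ref{3.3} (which in turn relied on Theorem \ref{K_iforskpw} via Quillen's filtered-ring theorem). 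The only mild care needed is to verify that the hypotheses of Proposition \ref{Kprel}, in particular the left Noetherian and left regular properties of $R$, propagate through the identification, but this is immediate from Remark \ref{3.3a}.
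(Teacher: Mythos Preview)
Your proposal is correct and follows exactly the same route as the paper: the paper's proof is the single line ``Follows from Lemma \ref{3.3} and Proposition \ref{Kprel},'' and you have simply unpacked that sentence by inserting the identification (\ref{15ene}) and reading off the binomial coefficients for $m=0,1,2$.
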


\smallskip

\begin{center}
\textbf{Acknowledgements}
\end{center}
The authors are grateful to the editors and the referee for valuable suggestions and corrections.


\end{document}